\documentclass[11pt]{amsart}

\usepackage{graphicx,amscd,amsmath,amsfonts,amssymb} 
\usepackage{amssymb}
\usepackage{amsmath}
\usepackage{amsfonts}
\usepackage{graphicx}
\usepackage[initials]{amsrefs}

\setcounter{MaxMatrixCols}{10}

\newtheorem{theorem}{Theorem}[section]

\newtheorem{corollary}[theorem]{Corollary}

\newtheorem{remark}[theorem]{Remark}
\newtheorem{remarks}[theorem]{Remarks}
\newtheorem{lemma}[theorem]{Lemma}

\numberwithin{equation}{section}

\newcommand{\N}{\mathbb{N}} 
\newcommand{\R}{\mathbb{R}} 
\newcommand{\Q}{\mathbb{Q}} 
\newcommand{\ve}{\varepsilon}
\newcommand{\al}{\alpha}
\newcommand{\la}{\lambda}

\begin{document}

\title[Lineability in sequence and function spaces]{Lineability in sequence and function spaces}

\author[Ara\'{u}jo]{G. Ara\'{u}jo\textsuperscript{*}}
\address[G. Ara\'{u}jo]{Departamento de Matem\'{a}tica \\
Universidade Federal da Para\'{\i}ba \\
Jo\~{a}o Pessoa - PB \\
58.051-900 (Brazil)}
\email{gdasaraujo@gmail.com}

\author[Bernal]{L. Bernal-Gonz\'{a}lez\textsuperscript{**}}
\address[L. Bernal-Gonz\'{a}lez]{Departamento de An\'{a}lisis Matem\'{a}tico \\
Universidad de Sevilla \\
Apdo. 1160 \\
Avenida Reina Mercedes \\
Sevilla, 41080 (Spain).}
\email{lbernal@us.es}

\author[Mu\~{n}oz]{G.A. Mu\~{n}oz-Fern\'{a}ndez\textsuperscript{***}}
\address[G. A. Mu\~{n}oz-Fern\'{a}ndez]{Departamento de An\'{a}lisis Matem\'{a}tico \\
Facultad de Ciencias Matem\'{a}ticas \\
Plaza de Ciencias 3 \\
Universidad Complutense de Madrid \\
Madrid, 28040 (Spain).}
\email{gustavo\_fernandez@mat.ucm.es}

\author[Prado]{J.A. Prado-Bassas\textsuperscript{**}}
\address[J. A. Prado-Bassas]{Departamento de An\'{a}lisis Matem\'{a}tico \\
Universidad de Sevilla \\
Apdo. 1160 \\
Avenida Reina Mercedes \\
Sevilla, 41080 (Spain).}
\email{bassas@us.es}

\author[Seoane]{J.B. Seoane-Sep\'{u}lveda\textsuperscript{***}}
\address[J. B. Seoane-Sep\'{u}lveda]{ICMAT and Departamento de An\'{a}lisis Matem\'{a}tico \\
Facultad de Ciencias Matem\'{a}ticas \\
Plaza de Ciencias 3 \\
Universidad Complutense de Madrid \\
Madrid, 28040 (Spain).}
\email{jseoane@ucm.es}

\thanks{\textsuperscript{*}Supported by PDSE/CAPES 8015/14-7.}
\thanks{\textsuperscript{**}Supported by the Plan Andaluz de Investigaci\'{o}n de la Junta de Andaluc\'{\i}a FQM-127 Grant P08-FQM-03543 and by MEC Grant MTM2012-34847-C02-01.}
\thanks{\textsuperscript{***}Supported by the Spanish Ministry of Science and Innovation, grant MTM2012-34341.}

\subjclass[2010]{Primary 28A20; Secondary 15A03, 26A24, 40A05, 46A45.}
\keywords{Dense-lineability, algebrability, measurable function, differentiable function, separate continuity, divergent series.}

\begin{abstract}
It is proved the existence of large algebraic structures \break --including large vector subspaces or infinitely generated free algebras-- inside, among others, the family of Lebesgue measurable functions that are surjective in a strong sense, the family of nonconstant differentiable real functions vanishing on dense sets, and the family of non-continuous separately continuous real functions. Lineability in special spaces of sequences is also investigated. Some of our findings complete or extend a number of results by several authors.
\end{abstract}

\maketitle

\section{Introduction and notation}

\quad Lebesgue (\cites{lebesgue1904}, 1904) was probably
the first to show an example of a real function on the reals satisfying the rather surprising
property that it takes on each real value in any nonempty open set (see also \cites{gelbaumolmsted1964,gelbaumolmsted2003}).
The functions satisfying this property are called
\emph{everywhere surjective} (functions with even more stringent properties can be found in \cite{foran1991,jordan1998}).
Of course, such functions are nowhere continuous but, as we will see later, it is possible to construct a \emph{Lebesgue measurable}
everywhere surjective function. Entering a very different realm, in 1906 Pompeiu \cite{pompeiu1906} was able
to construct a nonconstant differentiable function on the reals whose derivative \emph{vanishes on a dense set.}
Passing to several variables, the first problem one meets related to the ``minimal regularity'' of functions at a elementary level is that of whether separate continuity implies continuity, the answer being given in the negative.
In this paper, we will consider the families consisting of each of these kinds of functions, as well as two special families of sequences, and analyze the existence of large algebraic structures inside all these families. Nowadays the topic of lineability has had a major influence in many different areas on mathematics, from Real and Complex Analysis \cite{israel}, to Set Theory \cite{gamezseoane2013}, Operator Theory \cite{hernandezruizsanchez2015}, and even (more recently) in Probability Theory \cite{fenoyseoane2015}. Our main goal here is to continue with this ongoing research.

\vskip .15cm

Let us now fix some notation. As usual, we denote by \,$\N, \, \Q$ \,and \,$\R$ \,the set of positive integers, the set of rational numbers and the set of all real numbers, respectively.
The symbol \,${\mathcal C} (I)$ \,will stand for the vector space of all real continuous functions defined on an interval \,$I \subset \R$.
In the special case \,$I = \R$, the space \,${\mathcal C} (\R )$ \,will be endowed with the topology of the convergence in compacta.
It is well known that  \,${\mathcal C} (\R )$ \,under this topology is an $F$-space, that is, a complete metrizable topological vector space. 

\vskip .15cm

By \,$\mathcal{MES}$ \,it is denoted the family of Lebesgue measurable everywhere surjective functions \,$\R \to \R$. A function \,$f:\R \to \R$ \,is said to be a
\emph{Pompeiu function} (see Figure \ref{Pompeiu}) provided that it is differentiable and \,$f'$ \,vanishes on a dense set in \,$\R$. The symbols \,${\mathcal P}$ \,and \,$\mathcal{DP}$ \,stand
for the vector spaces of Pompeiu functions and of the derivatives of Pompeiu functions, respectively. Additional notation will be rather usual and, when needed, definitions will be provided.

\begin{figure}
	\centering
	\includegraphics[width=0.8\textwidth]{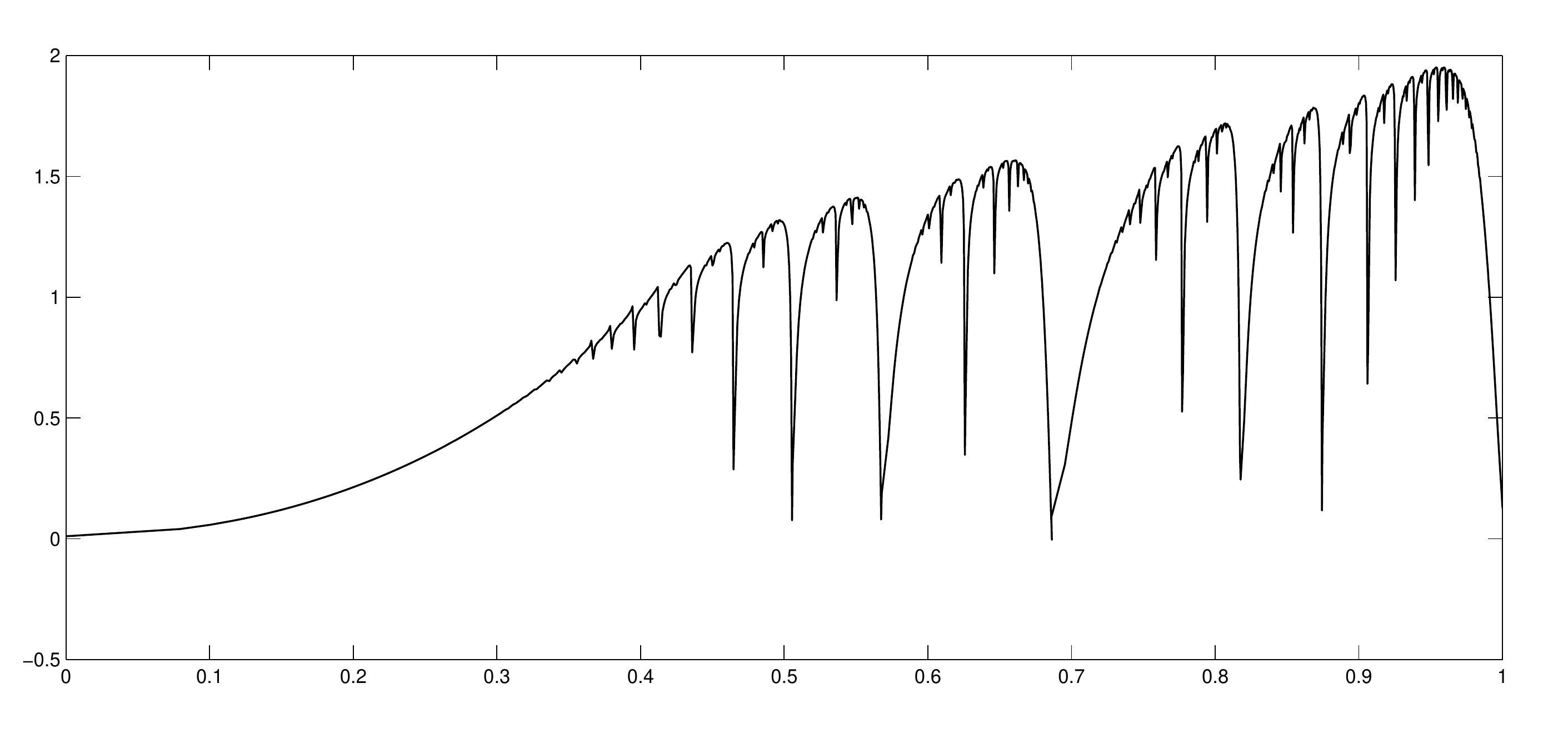}
	\caption{Rough sketch of the graph of Pompeiu's original example.}\label{Pompeiu}
\end{figure}

\vskip .15cm

The organization of this paper is as follows. In section 2, a number of concepts concerning the linear or algebraic structure of sets inside a vector space or a linear algebra, together with some examples related to everywhere surjectivity and special derivatives, will be recalled. Sections 3, 4, and 5 will focus on diverse lineability properties of the families \,$\mathcal{MES}$, $\mathcal{P}$,  \,$\mathcal{DP}$, and certain subsets of discontinuous functions, so completing or extending a number of known results about several strange classes of real functions. Concerning sequence spaces, section 6 will deal with subsets of convergent and divergent series for which classical tests of convergence fail and, finally, in section 7 convergence in measure versus convergence almost everywhere will be analyzed in the space of sequences of measurable Lebesgue functions on the unit interval.

\section{Lineability notions}

A number of concepts have been coined in order to describe the
algebraic size of a given set; see  \cite{arongurariyseoane2005,bayart2005,bernal2010,enflogurariyseoane2014,gurariyquarta2004}
(see also the survey paper \cite{bernalpellegrinoseoane2014} and the forthcoming book \cite{aronbernalpellegrinoseoane2015}
for an account of lineability properties of specific subsets of vector spaces).
Namely, if \,$X$ \,is a vector space, $\alpha$ \,is a cardinal number and \,$A \subset X$, then \,$A$ \,is said to be:
\begin{enumerate}
\item[$\bullet$] {\it lineable} if there is an infinite dimensional vector space $M$ such that $M \setminus \{0\} \subset A$,
\item[$\bullet$] {\it $\alpha$-lineable} if there exists a vector space $M$ with dim$(M) = \alpha$ and $M \setminus \{0\} \subset A$
(hence lineability means $\aleph_0$-lineability,
where $\aleph_0 = {\rm card}\,(\N )$, the cardinality of \,$\N$), and
\item[$\bullet$] {\it maximal lineable} in $X$ if $A$ is ${\rm dim}\,(X)$-lineable.
\end{enumerate}
If, in addition, $X$ is a topological vector space, then $A$ is said to be:
\begin{enumerate}
\item[$\bullet$] {\it dense-lineable} in \,$X$
\,whenever there is a dense vector subspace $M$ of $X$ satisfying \,$M \setminus \{0\} \subset A$
\,(hence dense-lineability implies lineability as soon as dim$(X) = \infty$), and
\item[$\bullet$] {\it maximal dense-lineable} in $X$
whenever there is a dense vector subspace $M$ of $X$ satisfying $M \setminus \{0\} \subset A$ and
dim$\,(M) =$ dim$\,(X)$.
\end{enumerate}
And, according to \cite{aronperezseoane2006,bartoszewiczglab2013}, when \,$X$ \,is a topological vector space contained in some (linear) algebra then
$A$ is called:
\begin{enumerate}
\item[$\bullet$] {\it algebrable} if there is an algebra \,$M$ so
    that $M \setminus \{0\} \subset A$ and $M$ is infinitely generated, that is, the cardinality of any system of generators of \,$M$ is infinite.
\item[$\bullet$] {\it densely algebrable} in $X$ if, in addition, $M$ can be taken dense in $X$.
\item[$\bullet$] {\it $\alpha$-algebrable} if there is an $\alpha$-generated algebra \,$M$ with \,$M \setminus \{0\} \subset A$.
\item[$\bullet$] {\it strongly $\alpha$-algebrable} if there exists an $\alpha$-generated {\it free} algebra \,$M$ with \,$M \setminus \{0\} \subset A$ (for $\alpha = \aleph_0$, we simply say {\it strongly algebrable}).
\item[$\bullet$] {\it densely strongly $\alpha$-algebrable} if, in addition, the free algebra \,$M$ can be taken dense in \,$X$.
\end{enumerate}

\vskip .15cm

Note that if $X$ is contained in a commutative algebra then a set $B \subset X$ is a generating set of some free algebra contained in $A$
if and only if for any $N \in \N$, any nonzero polynomial $P$ in $N$ variables without constant term and any distinct $f_1,...,f_N \in B$, we have
$P(f_1, \dots ,f_N) \ne 0$ and $P(f_1, \dots ,f_N) \in A$. Observe that strong $\alpha$-algebrability $\Longrightarrow$ $\alpha$-algebrability $\Longrightarrow$ $\alpha$-lineability, and none of these implications can be reversed; see \cite[p.~74]{bernalpellegrinoseoane2014}.

\vskip .15cm

In \cite{arongurariyseoane2005}
the authors proved that the set of \emph{everywhere surjective} functions \,$\R \to \R$ \,is $2^{\mathfrak c}$-lineable,
which is the {best} possible result in terms of dimension (we have denoted by \,$\mathfrak{c}$ \,the cardinality of the continuum). In other words, the last set is maximal lineable in the space of all real functions.
Other results establishing the degree of lineability of more stringent classes of functions can be found in \cite{bernalpellegrinoseoane2014} and the references contained in it.

\vskip .15cm

Turning to the setting of more regular functions, in \cite{gamezmunozsanchezseoane2010} the following results are proved: the set of \emph{differentiable} functions
on \,$\R$ \,whose derivatives are discontinuous
almost everywhere is $\mathfrak{c}$-lineable; given a non-void compact interval \,$I \subset \R$, the family of differentiable
functions whose derivatives are discontinuous almost everywhere on \,$I$ \,is dense-lineable in the space \,${\mathcal C} (I)$, endowed with
the supremum norm; and the class of differentiable functions on \,$\R$ \,that are monotone on no interval is $\mathfrak{c}$-lineable.

\vskip .15cm

Finally, recall that every bounded variation function on an interval \,$I \subset \R$ \,(that is, a function satisfying
$\sup \{\sum_{i=1}^n |f(t_i) - f(t_{i-1})|: \, \{t_1 < t_2 < \cdots < t_n\} \subset I, \, n \in \N \} < \infty$) is {\it differentiable almost everywhere.}
A continuous bounded variation function \,$f:I \to \R$ \,is called strongly singular whenever \,$f'(x) = 0$ \,for almost every \,$x \in I$ \,and, in addition, \,$f$ \,is nonconstant on any subinterval of \,$I$. Balcerzak {\it et al.}~\cite{balcerzakbartoszewiczfilipczak2013} showed that the set of strongly singular functions on \,$[0,1]$ \,is densely strongly $\mathfrak{c}$-algebrable in \,${\mathcal C} ([0, 1])$.

\vskip .15cm

A number of results related to the above ones will be shown in the next two sections.

\section{Measurable functions}

\quad Our aim in this section is to study the lineability of the family of Lebesgue measurable functions \,$\R \to \R$ \,that are everywhere surjective, denoted $\mathcal{MES}$. This result is quite surprising, since (as we can see in \cite{gamez2011,gamezmunozsanchezseoane2010}), the class of everywhere surjective functions contains a $2^{\mathfrak{c}}$-lineable set of non-measurable ones (called {\em Jones functions}).

\begin{theorem}\label{Thm-MES-c-lineable}
The set \,$\mathcal{MES}$ \,is $\mathfrak{c}$-lineable.
\end{theorem}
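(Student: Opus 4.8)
The plan is to produce a single everywhere surjective measurable function $f_0$ and then spread it over $\R$ using a partition into $\mathfrak c$ many pieces, so that a $\mathfrak c$-dimensional space of linear combinations remains everywhere surjective and measurable. First I would recall (or construct) one concrete Lebesgue measurable everywhere surjective function $f_0:\R\to\R$; a standard device is to use a Cantor-type set of measure zero together with the structure of $\R$ as a $\mathfrak c$-dimensional $\Q$-vector space, or to exploit that $\R$ can be partitioned into $\mathfrak c$ many dense measure-zero sets on each of which one prescribes all real values. The key point for later is that the ``bad'' behavior of $f_0$ can be made to occur on a null set, so that $f_0 = 0$ almost everywhere and hence $f_0$ is measurable.

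Next I would fix a partition $\R = \bigsqcup_{\alpha \in \R} A_\alpha$ where each $A_\alpha$ is itself a dense set of Lebesgue measure zero that meets every nonempty open interval (this is routine: e.g. take a countable dense set inside a null $G_\delta$, or index translates of a fixed null dense set). On each $A_\alpha$ build, by the same construction as for $f_0$, a function $g_\alpha$ that is surjective onto $\R$ when restricted to $A_\alpha \cap J$ for every nonempty open interval $J$, and set $g_\alpha = 0$ off $A_\alpha$. Each $g_\alpha$ vanishes outside a null set, hence is measurable, and each $g_\alpha$ is everywhere surjective. Then I would consider the linear span $M = \operatorname{span}\{g_\alpha : \alpha \in \R\}$.

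It remains to check the two properties of $M$. Linear independence of the $g_\alpha$ is immediate since they have pairwise disjoint supports, so $\dim M = \mathfrak c$. For a nonzero element $g = \sum_{i=1}^n \lambda_i g_{\alpha_i} \in M$ (with all $\lambda_i \ne 0$ and the $\alpha_i$ distinct), fix a nonempty open interval $J$ and a target value $y \in \R$: pick a point $x \in A_{\alpha_1}\cap J$ with $g_{\alpha_1}(x) = y/\lambda_1$; since the supports are disjoint, $g_{\alpha_j}(x) = 0$ for $j \ge 2$, whence $g(x) = y$. Thus $g$ is everywhere surjective. Finally $g$ is a finite linear combination of measurable functions, hence measurable. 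Therefore $M\setminus\{0\}\subset\mathcal{MES}$ and $\mathcal{MES}$ is $\mathfrak c$-lineable.

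The main obstacle is the very first step: producing even one measurable everywhere surjective function, together with arranging its defining ``support'' to be a null set so that measurability survives — and doing this uniformly over a partition of $\R$ into $\mathfrak c$ many dense null sets. Once that construction is in hand, the disjoint-support argument makes the lineability conclusion essentially formal. (One should also double-check that $\mathfrak c$-lineability, rather than the maximal $2^{\mathfrak c}$, is indeed the right target here: measurability is preserved under countable operations but the space of all measurable functions has dimension $\mathfrak c$ only after quotienting by null functions — in the genuine space of measurable functions the dimension is $2^{\mathfrak c}$, so $\mathfrak c$-lineability is a real, non-maximal statement, consistent with the theorem as stated.)
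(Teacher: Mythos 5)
Your strategy is genuinely different from the paper's. The paper builds one measurable everywhere surjective $f$ (zero off a disjoint union of null Cantor-type sets $C_n\subset I_n$, with a bijection $C_n\to\R$ on each), and then obtains a $\mathfrak c$-dimensional space as $M=\{g\circ f:\ g\in\Lambda\}$, where $\Lambda=\operatorname{span}\{e^{\alpha x}-e^{-\alpha x}:\alpha>0\}$ is a $\mathfrak c$-dimensional space of continuous functions each nonzero member of which is surjective; everywhere surjectivity of $g\circ f$ is then immediate from $g(f(J))=g(\R)=\R$, and measurability from composing a measurable function with a continuous one. You instead spread the construction over $\mathfrak c$ many pairwise disjoint null supports and exploit disjointness of supports to get both linear independence and surjectivity of nonzero combinations. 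Both routes work; the composition trick is more robust (it is reused elsewhere in the paper and needs only one everywhere surjective function), while your disjoint-support argument is more elementary once the family $\{g_\alpha\}$ exists.

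There is, however, a gap in the one step you dismiss as routine. A countable dense set cannot serve as $A_\alpha$: if $A_\alpha\cap J$ is countable, then $g_\alpha(A_\alpha\cap J)$ is countable and cannot equal $\R$, so that suggestion is fatal if taken literally. The translate suggestion also does not obviously give pairwise disjoint sets, since $(A+t)\cap(A+t')=\varnothing$ requires $t-t'\notin A-A$, and $A-A$ can contain intervals. What you actually need is $\mathfrak c$ many pairwise disjoint sets, each contained in a null set and meeting every nonempty open interval in a set of cardinality $\mathfrak c$ (so that a surjection onto $\R$ can be prescribed inside every interval). The clean fix is to run the paper's first step --- pairwise disjoint null Cantor-type sets $C_n\subset I_n$ for the rational intervals $I_n$ --- and then partition each $C_n$ into $\mathfrak c$ pieces $C_{n,\alpha}$ of cardinality $\mathfrak c$, setting $A_\alpha=\bigcup_n C_{n,\alpha}$ and letting $g_\alpha$ map each $C_{n,\alpha}$ onto $\R$ and vanish elsewhere. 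Note also that each $A_\alpha$ need not itself be measurable; it suffices that it sit inside a null set, since Lebesgue measure is complete and $g_\alpha=0$ off $A_\alpha$. With that repair the rest of your argument (disjoint supports, hence independence and everywhere surjectivity of every nonzero combination) is correct.
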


\begin{proof}
Firstly, we consider the everywhere surjective function furnished in \cite{gamezmunozsanchezseoane2010}*{Example 2.2}.
For the sake of convenience, we reproduce here its construction.
Let $(I_n)_{n\in\mathbb N}$ be the collection of all open intervals with rational endpoints. The interval
$I_1$ contains a Cantor type set, call it $C_1$. Now, $I_2 \setminus C_1$ also contains a Cantor type
set, call it $C_2$. Next, $I_3\setminus (C_1 \cup C_2)$ contains, as well, a Cantor type set, $C_3$.
Inductively, we construct a family of pairwise disjoint Cantor type sets, $(C_n)_{n\in\mathbb N}$, such that for every
$n \in \mathbb{N}$, $I_n \setminus (\bigcup_{k=1}^{n-1} C_k) \supset C_n$. Now,
for every $n \in \N$, take any bijection $\phi_n : C_n \to \R$,
and define $f\colon \mathbb{R} \to \mathbb{R}$ as
    \[f(x)=\begin{cases}
    \phi_n(x)&\text{if }x\in C_n,\\
    0&\text{otherwise.}
    \end{cases}\]
Then $f$ is clearly everywhere surjective.
Indeed, let $I$ be any~interval in $\mathbb{R}$. There exists $k \in \mathbb{N}$ such that $I_k \subset I$.
Thus \,$f(I) \supset f(I_k) \supset f(C_k) = \phi_k(C_k) = \R$.

\vskip .15cm

But the novelty of the last function is that \,$f$ \,is, in addition, zero almost everywhere, and in particular, it is (Lebesgue) {\it measurable.}
That is, $f \in \mathcal{MES}$.

\vskip .15cm

Now, taking advantage of the approach of \cite{arongurariyseoane2005}*{Proposition 4.2}, we are going to construct a vector space that shall be useful later on. Let
$$
\Lambda := {\rm span} \, \{\varphi_\al : \, \al > 0\},
$$
where \,$\varphi_\al (x) := e^{\al x} - e^{- \al x}$. Then \,$M$ \,is a $\mathfrak c$-dimensional vector space because the functions
\,$\varphi_\al$ $(\al > 0)$ \,are linearly independent. Indeed, assume that there are scalars \,$c_1, \dots ,c_p$ \,(not all \,$0$) as well as positive reals
$\al_1, \dots ,\al_p$ \,such that \,$c_1 \varphi_{\al_1} (x) + \cdots + c_p \varphi_{\al_p} (x) = 0$ \,for all \,$x \in \R$.
Without loss of generality, we may assume that \,$p \ge 2$, $c_p \ne 0$ \,and \,$\al_1 < \al_2 < \cdots < \al_p$. Then
\,$\displaystyle\lim_{x \to +\infty} (c_1 \varphi_{\al_1} (x) + \cdots + c_p \varphi_{\al_p} (x)) = +\infty$ \,or \,$-\infty$, which is clearly a contradiction.
Therefore \,$c_1 = \cdots = c_p = 0$ \,and we are done. Note that each nonzero member
\,$g = \sum_{i=1}^p c_i \, \varphi_{\al_i}$ (with the \,$c_i$'s and the \,$\al_i$'s as before) \,of \,$\Lambda$ \,is (continuous and) surjective because
\,$\lim_{x \to +\infty} g(x) = +\infty$ \,and \,$\lim_{x \to -\infty} g(x) = -\infty$ \,if \,$c_p > 0$ (with the values of the limits interchanged if \,$c_p < 0$).

\vskip .15cm

Next, we define the vector space
$$
M := \{g \circ f: \, g \in \Lambda\}.
$$
Observe that, since the \,$f$ \,is measurable and the functions \,$g$ \,in \,$\Lambda$ \,are continuous, the members of \,$M$ \,are measurable.
Fix any \,$h \in M \setminus \{0\}$. Then, again, there are finitely many scalars $c_1, \dots ,c_p$ \,with \,$c_p \ne 0$, and positive reals
$\al_1 < \al_2 < \cdots < \al_p$ \,such that \,$g = c_1 \varphi_{\al_1} + \cdots + c_p \varphi_{\al_p}$ \,and \,$h = g \circ f$. Now, fix a non-degenerate
interval \,$J \subset \R$. Then \,$h(J) = g(f(J)) = g(\R ) = \R$, which shows that \,$h$ \,is everywhere surjective. Hence \,$M \setminus \{0\} \subset \mathcal{MES}$.

\vskip .15cm

Finally, by using the linear independence of the functions \,$\varphi_\al$ \,and the fact that \,$f$ \,is surjective, it is easy to see that the functions \,$\varphi_\al \circ f$ $(\al > 0)$ \,are linearly independent, which entails that \,$M$ \,has dimension \,$\mathfrak c$, as required.
\end{proof}

In \cite[Example 2.34]{wisehall1993} it is exhibited one sequence of measurable everywhere surjective functions tending pointwise to zero. With Theorem \ref{Thm-MES-c-lineable} in hand,
we now get a plethora of such sequences, and even in a much easier way than \cite{wisehall1993}.

\begin{corollary}
The family of sequences \,$\{f_n\}_{n \ge 1}$ \,of Lebesgue measurable functions \,$\R \to \R$ \,such that \,$f_n$ \,converges pointwise to zero and such that
 \,$f_n(I) = \R$ \,for any positive integer \,$n$ \,and each non-degenerate interval $I$, is $\mathfrak{c}$-lineable.
\end{corollary}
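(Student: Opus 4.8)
The plan is to transport the $\mathfrak{c}$-dimensional space $M$ constructed in the proof of Theorem~\ref{Thm-MES-c-lineable} into the vector space $\mathcal{S}$ of all sequences $\{g_n\}_{n\ge 1}$ of functions $\R\to\R$ (equipped with coordinatewise operations), by attaching to each $h\in M$ a fixed null scalar sequence. Concretely, fix any sequence $(\la_n)_{n\ge 1}$ of nonzero real numbers with $\la_n\to 0$ (say $\la_n=1/n$), and define the linear map $T\colon M\to\mathcal{S}$ by $T(h):=\{\la_n h\}_{n\ge 1}$. Put $N:=T(M)$.

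First I would observe that $T$ is injective: if $T(h)=0$ then $\la_1 h=0$, and $\la_1\ne 0$ forces $h=0$. Hence $\dim N=\dim M=\mathfrak{c}$, and $N$ is a $\mathfrak{c}$-dimensional subspace of $\mathcal{S}$; it therefore suffices to check that every nonzero element of $N$ is a sequence of the required kind.

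To that end, take $h\in M\setminus\{0\}$ and write $f_n:=\la_n h$, so that $\{f_n\}_{n\ge 1}=T(h)$. By Theorem~\ref{Thm-MES-c-lineable} (more precisely, by its proof) $h$ is a Lebesgue measurable everywhere surjective function, so $h(I)=\R$ for every non-degenerate interval $I\subset\R$. Consequently each $f_n$ is measurable, being a scalar multiple of $h$; for every non-degenerate interval $I$ we get $f_n(I)=\la_n\, h(I)=\la_n\,\R=\R$ since $\la_n\ne 0$; and, for each fixed $x\in\R$, the value $h(x)$ is a fixed real number, whence $f_n(x)=\la_n h(x)\to 0$ as $n\to\infty$, i.e.\ $f_n\to 0$ pointwise. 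Thus $\{f_n\}_{n\ge 1}$ lies in the stated family and $N\setminus\{0\}$ is contained in it, which gives the $\mathfrak{c}$-lineability.

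I do not expect any genuine obstacle here: everywhere-surjectivity on intervals is preserved under multiplication by a nonzero scalar, and pointwise convergence to $0$ comes for free once the scalars tend to $0$. The only mild point to keep straight is that the scalars $\la_n$ must be chosen simultaneously nonzero (to retain $f_n(I)=\R$) and vanishing (to obtain the pointwise limit), and that $T$ be injective so that the dimension is not lost — both automatic for any null sequence of nonzero reals.
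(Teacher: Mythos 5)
Your proposal is correct and coincides with the paper's own argument: the paper takes exactly the sequence $h_n(x)=h(x)/n$ for $h$ ranging over the space $M$ of Theorem~\ref{Thm-MES-c-lineable}, which is your map $T$ with $\la_n=1/n$. The verifications (injectivity hence dimension $\mathfrak{c}$, measurability, $f_n(I)=\la_n h(I)=\R$, and pointwise convergence to $0$) are the same ones the paper leaves to the reader.
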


\begin{proof}
Consider the family \,$\widetilde{M}$ \,consisting of all sequences \,$\{h_n\}_{n \ge 1}$ \, given by \,$h_n (x) = h(x)/n$ \,where the functions \,$h$ \,run over the vector space \,$M$ \,constructed in the last theorem. It is easy to see that \,$\widetilde{M}$ \,is a $\mathfrak{c}$-dimensional vector subspace of \,$(\R^{\R})^{\N}$, that each \,$h_n$ \,is measurable, that \,$h_n(x) \to 0$ $(n \to \infty )$ \,for every \,$x \in \R$, and that every \,$h_n$ \,is everywhere surjective if \,$h$ \,is not the zero function.
\end{proof}

\begin{remark}
{\rm It would be interesting to know whether \,$\mathcal{MES}$ \,is --likewise the set of everywhere surjective functions-- maximal lineable in \,$\R^{\R}$ (that is, $2^{\mathfrak c}$-lineable).}
\end{remark}

\section{Special differentiable functions}

\quad In this section, we analyze the lineability of the set of Pompeiu functions that are not constant on any interval. Of course, this set is not a vector space.

\vskip .15cm

Firstly, the following version of the well-known Stone--Weierstrass density theorem (see e.g.~\cite{rudin1991}) for the space \,$\mathcal{C}(\R)$ \,will be relevant to the proof of our main result. Its proof is a simple application of the original Stone--Weierstrass theorem for \,$\mathcal{C}(S)$ \,(the Banach space of continuous functions \,$S \to \R$, endowed with the uniform distance, where \,$S$ \,is a compact topological space) together with the fact that convergence in \,$\mathcal{C}(\R)$ \,means
convergence on each compact subset of \,$\R$. So we omit the proof.

\begin{lemma}\label{SWforC(R)}
Suppose that \,$\mathcal A$ \,is a subalgebra of \,${\mathcal C} (\R )$ satisfying the following properties:
\begin{enumerate}
\item [\rm (a)] Given \,$x_0 \in \R$ \,there is $F \in {\mathcal A}$ \,with \,$F(x_0) \ne 0$.
\item [\rm (b)] Given a pair of distinct points \,$x_0,x_1 \in \R$, there exists \,$F \in {\mathcal A}$ \,such that \,$F(x_0) \ne F(x_1)$.
\end{enumerate}
Then \,$\mathcal A$ \,is dense in \,${\mathcal C} (\R )$.
\end{lemma}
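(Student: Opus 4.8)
The plan is to reduce the statement for $\mathcal{C}(\R)$ to the classical Stone--Weierstrass theorem on compact sets, exploiting that the topology on $\mathcal{C}(\R)$ is that of uniform convergence on compacta. First I would recall that this topology is induced by the increasing sequence of seminorms $p_k(F) = \sup_{|x| \le k} |F(x)|$, so a neighbourhood basis of a given $g \in \mathcal{C}(\R)$ consists of sets of the form $\{F : p_k(F - g) < \ve\}$ with $k \in \N$ and $\ve > 0$. Hence it suffices to show: given $g \in \mathcal{C}(\R)$, $k \in \N$ and $\ve > 0$, there is $F \in \mathcal{A}$ with $\sup_{|x| \le k} |F(x) - g(x)| < \ve$.

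Next I would pass to the compact interval $S = [-k,k]$ and consider the restriction algebra $\mathcal{A}|_S = \{F|_S : F \in \mathcal{A}\}$, which is a subalgebra of $\mathcal{C}(S)$. The key point is that hypotheses (a) and (b), being stated for \emph{all} points of $\R$, in particular hold for all points of $S$: so $\mathcal{A}|_S$ vanishes at no point of $S$ and separates the points of $S$. The classical Stone--Weierstrass theorem for $\mathcal{C}(S)$ then gives that $\mathcal{A}|_S$ is dense in $\mathcal{C}(S)$ with the sup norm; applying this to $g|_S$ produces $F \in \mathcal{A}$ with $\sup_{x \in S} |F(x) - g(x)| < \ve$, which is exactly what the seminorm estimate required. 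Since $g$, $k$ and $\ve$ were arbitrary, $\mathcal{A}$ meets every basic neighbourhood of every point of $\mathcal{C}(\R)$, i.e. $\mathcal{A}$ is dense.

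I do not expect a serious obstacle here; the only points needing a word of care are (i) that the real (as opposed to complex) version of Stone--Weierstrass is the relevant one, so no conjugation-closure hypothesis is needed, and (ii) that the ``vanishes at no point'' form of the hypothesis — rather than the more usual ``contains the constants'' — is precisely the one in the standard statement, so no adjustment is necessary. Because of this the proof is genuinely routine, which is why it is reasonable to omit it in the text as the authors do.
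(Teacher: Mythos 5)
Your proof is correct and follows exactly the route the paper indicates (the authors omit the proof but describe it as a reduction to the classical Stone--Weierstrass theorem on compact sets via the fact that convergence in ${\mathcal C}(\R)$ is uniform convergence on compacta). Restricting to $S=[-k,k]$, noting that hypotheses (a) and (b) give the ``vanishes at no point'' and ``separates points'' conditions for the restriction algebra, and invoking the real Stone--Weierstrass theorem is precisely the intended argument.
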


In \cite[Proposition 7]{balcerzakbartoszewiczfilipczak2013}, Balcerzak, Bartoszewicz and Filipczak established a nice algebrability result by using the so-called
\textit{exponential-like functions,} that is, the functions \,$\varphi:\R \to \R$ \, of the form
$$
\varphi (x) = \sum_{j=1}^{m} a_j e^{b_j x}
$$
for some \,$m \in \N$, some $a_1,\ldots,a_m\in\R\setminus\{0\}$ \,and some distinct \,$b_1,\ldots,b_m\in \R\setminus\{0\}$.
By \,${\mathcal E}$ we denote the class of exponential-like functions.
The following lemma (see \cite{bernal2014} or \cite[Chapter 7]{aronbernalpellegrinoseoane2015}) is a slight variant of the mentioned Proposition 7
of \cite{balcerzakbartoszewiczfilipczak2013}.

\begin{lemma} \label{Lemma-algebrabilitycriterium}
Let \,$\Omega$ \,be a nonempty set and \,${\mathcal F}$ \,be a family of functions \,$\Omega \to \R$.
Assume that there exists a function \,$f \in {\mathcal F}$
such that \,$f(\Omega )$ is uncountable and \,$\varphi \circ f \in {\mathcal F}$ \,for every \,$\varphi \in {\mathcal E}$.
Then \,${\mathcal F}$ is strongly $\mathfrak{c}$-algebrable. More precisely, if \,$H \subset (0,\infty )$ \,is a set with
\,{\rm card}$(H) = \mathfrak{c}$ \,and linearly independent over the field \,$\Q$, then
$$
\{\exp \circ \, (rf): \, r \in H\}
$$
is a free system of generators of an algebra contained in \,${\mathcal F} \cup \{0\}$.
\end{lemma}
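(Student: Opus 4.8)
The plan is to verify directly the free-generation criterion recalled in Section 2: it suffices to show that for every $N \in \N$, every nonzero polynomial $P$ in $N$ variables without constant term, and any distinct $r_1, \dots, r_N \in H$, the function $P(\exp\circ(r_1 f), \dots, \exp\circ(r_N f))$ is a nonzero element of ${\mathcal F}$. The membership in ${\mathcal F}$ part is the easy half: expanding $P$ and collecting terms, $P(\exp\circ(r_1 f), \dots, \exp\circ(r_N f))$ has the form $\sum_{j=1}^m a_j \exp\bigl(\sum_{i=1}^N k_{ij} r_i f\bigr)$ with $a_j \in \R\setminus\{0\}$ and each exponent a nonnegative integer combination $\sum_i k_{ij} r_i$; since $P$ has no constant term, none of these exponents is identically zero, and since $r_1, \dots, r_N$ are positive and $\Q$-linearly independent the numbers $b_j := \sum_i k_{ij} r_i$ are distinct and nonzero. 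Hence $x \mapsto \sum_j a_j e^{b_j x}$ is precisely an exponential-like function $\varphi \in {\mathcal E}$, so $P(\exp\circ(r_1 f), \dots, \exp\circ(r_N f)) = \varphi \circ f \in {\mathcal F}$ by hypothesis.

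It remains to prove that $\varphi \circ f \not\equiv 0$, equivalently that $\varphi$ does not vanish identically on $f(\Omega)$. This is where I expect the only real work to lie. The key point is that a nonzero exponential-like function $\varphi(x) = \sum_{j=1}^m a_j e^{b_j x}$ has only finitely many real zeros: after dividing by $e^{b_1 x}$ (with $b_1$ the smallest exponent, say) one gets $a_1 + \sum_{j\ge 2} a_j e^{(b_j - b_1)x}$, a nonzero real-analytic function that, together with all its derivatives of a suitable order, cannot vanish on an infinite set by the identity theorem — or, more elementarily, one argues by induction on $m$ using Rolle's theorem, since the derivative of an exponential-like function with $m$ terms is again exponential-like with $m$ terms (the nonzero exponents are preserved), so $\varphi$ can have at most $m-1$ zeros. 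In any case $\varphi$ has at most finitely many real zeros, hence $\{x : \varphi(x) = 0\}$ is finite. Since $f(\Omega)$ is uncountable, it cannot be contained in this finite set, so there exists $\omega \in \Omega$ with $\varphi(f(\omega)) \ne 0$, i.e. $\varphi \circ f \ne 0$.

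Finally, to conclude strong $\mathfrak{c}$-algebrability I would invoke the existence of a $\Q$-linearly independent set $H \subset (0,\infty)$ with $\mathrm{card}(H) = \mathfrak{c}$: for instance, a Hamel basis of $\R$ over $\Q$ has cardinality $\mathfrak{c}$, and by translating one may assume its elements are positive, or one uses a standard transcendence-type construction. By the two paragraphs above, $\{\exp\circ(rf) : r \in H\}$ is a free system of generators of an algebra all of whose nonzero elements lie in ${\mathcal F}$; this algebra is $\mathfrak{c}$-generated and free, which is exactly the assertion. The main obstacle is genuinely just the finite-zero property of exponential-like functions; everything else is bookkeeping in the free-algebra criterion and a cardinality remark.
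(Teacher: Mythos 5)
Your proposal is correct, and it reconstructs essentially the argument behind the result: the paper itself does not prove this lemma but quotes it (as a variant of Proposition 7 of \cite{balcerzakbartoszewiczfilipczak2013}, via \cite{bernal2014}), and the proof there is exactly your reduction to the free-generation criterion of Section 2, the observation that $\Q$-linear independence and positivity of $H$ force $P(\exp\circ(r_1f),\dots,\exp\circ(r_Nf))=\varphi\circ f$ for some $\varphi\in{\mathcal E}$, and the clash between the uncountability of $f(\Omega)$ and the small zero set of $\varphi$. One minor slip: in your Rolle parenthetical, differentiating an exponential-like function with $m$ terms directly yields again $m$ terms, so the induction does not close as phrased; the standard fix is to first multiply by $e^{-b_mx}$ so that one term becomes constant and differentiation drops the count to $m-1$. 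This does not create a gap, since your alternative observation --- that $\varphi$ extends to a nonzero entire function, whose real zero set is therefore discrete and countable --- already gives all that is needed, namely that this zero set cannot contain the uncountable set $f(\Omega)$.
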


Lemma \ref{Lemma-denselystralgebrable} below is an adaptation of a result that is implicitly contained in \cite[Section 6]{bartoszewiczbieniasfilipczakglab2014}.
We sketch the proof for the sake of completeness.

\begin{lemma}\label{Lemma-denselystralgebrable}
Let \,$\mathcal{F}$ \,be a family of functions in \,$\mathcal{C}(\R)$. Assume that there exists a strictly monotone function \,$f\in \mathcal{F}$ \,such that \,$\varphi\circ f\in \mathcal{F}$ \,for every exponential-like function \,$\varphi$. Then \,$\mathcal{F}$ \,is densely strongly $\mathfrak{c}$-algebrable in \,$\mathcal{C}(\R)$.
\end{lemma}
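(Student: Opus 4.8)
The plan is to combine Lemma~\ref{Lemma-algebrabilitycriterium} with Lemma~\ref{SWforC(R)}. Lemma~\ref{Lemma-algebrabilitycriterium} already hands us, from the hypothesis ``$f$ strictly monotone, hence $f(\R)$ uncountable, and $\varphi\circ f\in\mathcal F$ for all $\varphi\in\mathcal E$'', a free system of generators $\{\exp\circ(rf):r\in H\}$ (with $H\subset(0,\infty)$ of cardinality $\mathfrak c$ and $\Q$-linearly independent) of an algebra $\mathcal A$ contained in $\mathcal F\cup\{0\}$. So strong $\mathfrak c$-algebrability is free; the entire content of the lemma beyond what is already proved is the word \emph{densely}. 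Thus the goal reduces to: show that this particular algebra $\mathcal A$ (or a suitable variant of it) is dense in $\mathcal C(\R)$.

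First I would check that $\mathcal A$ satisfies the two hypotheses of Lemma~\ref{SWforC(R)}. Separation of points~(b): pick any $r\in H$; the generator $G:=\exp\circ(rf)$ is strictly monotone because $f$ is strictly monotone and $\exp$ and $t\mapsto rt$ (with $r>0$) are strictly increasing, so $G$ is injective, hence for distinct $x_0,x_1$ we have $G(x_0)\ne G(x_1)$, and $G\in\mathcal A$. Non-vanishing~(a): the same $G=\exp\circ(rf)$ is everywhere positive, so $G(x_0)\ne0$ for every $x_0$. Hence by Lemma~\ref{SWforC(R)} the subalgebra $\mathcal A$ is dense in $\mathcal C(\R)$. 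Combined with the freeness and the cardinality $\mathfrak c$ of the generating set coming from Lemma~\ref{Lemma-algebrabilitycriterium}, this gives exactly that $\mathcal F$ is densely strongly $\mathfrak c$-algebrable in $\mathcal C(\R)$.

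One subtlety to be careful about: Lemma~\ref{SWforC(R)} as stated asks for a \emph{subalgebra} of $\mathcal C(\R)$, and we must make sure $\mathcal A\subset\mathcal C(\R)$, i.e.\ that every nonzero element of $\mathcal A$ is a genuine continuous function on $\R$ and not merely a formal object; but each element of $\mathcal A$ is a polynomial without constant term in finitely many of the $\exp\circ(r_if)$, hence a finite linear combination of functions of the form $\exp\circ((\sum n_ir_i)f)$ with positive coefficients $\sum n_ir_i$, and each such function is continuous since $f\in\mathcal C(\R)$ and $\exp$ is continuous. So $\mathcal A\subset\mathcal C(\R)$ automatically, and there is nothing to adjust. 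The only place where genuine care is needed — and the part I would write out — is the verification that the generators are strictly monotone and positive so that the Stone--Weierstrass hypotheses hold; the rest is a direct invocation of the two quoted lemmas. This is why we say the proof is an adaptation of \cite[Section~6]{bartoszewiczbieniasfilipczakglab2014} and only sketch it.

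\begin{proof}[Sketch of proof]
By Lemma~\ref{Lemma-algebrabilitycriterium} (applied with $\Omega=\R$), since $f$ is strictly monotone the set $f(\R)$ is uncountable, and choosing $H\subset(0,\infty)$ with ${\rm card}(H)=\mathfrak c$ and linearly independent over $\Q$, the set $B:=\{\exp\circ(rf):r\in H\}$ is a free system of generators of an algebra $\mathcal A\subset\mathcal F\cup\{0\}$; in particular $\mathcal F$ is strongly $\mathfrak c$-algebrable. Each element of $\mathcal A$ is a finite linear combination of functions $\exp\circ(sf)$ with $s>0$, hence belongs to $\mathcal C(\R)$, so $\mathcal A$ is a subalgebra of $\mathcal C(\R)$. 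Pick one $r\in H$ and set $G:=\exp\circ(rf)\in B\subset\mathcal A$. As $f$ is strictly monotone and $t\mapsto e^{rt}$ is strictly increasing, $G$ is strictly monotone, hence injective, so $G$ separates points of $\R$; moreover $G(x)>0$ for every $x\in\R$, so $G$ vanishes nowhere. Thus $\mathcal A$ satisfies hypotheses (a) and (b) of Lemma~\ref{SWforC(R)}, and therefore $\mathcal A$ is dense in $\mathcal C(\R)$. Consequently $\mathcal F$ is densely strongly $\mathfrak c$-algebrable in $\mathcal C(\R)$.
\end{proof}
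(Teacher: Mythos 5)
Your proof is correct and follows essentially the same route as the paper: invoke Lemma~\ref{Lemma-algebrabilitycriterium} to obtain the free algebra generated by $\{\exp\circ(rf):r\in H\}$ inside $\mathcal F\cup\{0\}$, then verify the two hypotheses of Lemma~\ref{SWforC(R)} for a single generator $e^{\alpha f}$ (nonvanishing since it is positive, point-separating since $f$ and the exponential are injective) to conclude density. The only cosmetic difference is that you spell out why the algebra lies in $\mathcal C(\R)$ and why $f(\R)$ is uncountable, which the paper passes over more quickly.
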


\begin{proof}
If \,$\Omega = \R$ \,then \,$f(\Omega )$ \,is a non-degenerate interval, so it is an uncountable set.
Then, it is sufficient to show that the algebra \,$\mathcal A$ \,generated by the system
\,$\{\exp \circ \, (rf): \, r \in H\}$
\,given in Lemma \ref{Lemma-algebrabilitycriterium} is dense. For this, we invoke Lemma \ref{SWforC(R)}.
Take any \,$\alpha \in H \subset (0,+\infty )$. Given \,$x_0 \in \R$, the function \,$F(x) := e^{\alpha \, f(x)}$ \,belongs to
\,$\mathcal A$ \,and satisfies \,$F(x_0) \ne 0$. Moreover, for prescribed distinct points \,$x_0,x_1 \in \R$, the same function \,$F$ \,fulfills
\,$F(x_0) \ne F(x_1)$, because both functions \,$f$ \,and \,$x \mapsto e^{\al \, x}$ \,are one-to-one.
As a conclusion, $\mathcal A$ \,is dense in \,${\mathcal C} (\R )$.
\end{proof}

Now we state and prove the main result of this section.

\begin{theorem}\label{Thm-Pnonconstant-algebrable}
The set of functions in \,$\mathcal P$ \,that are nonconstant on any non-degenerated interval of \,$\R$ \,is densely strongly $\mathfrak{c}$-algebrable in \,${\mathcal C} (\R )$.
\end{theorem}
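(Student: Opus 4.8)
The plan is to apply Lemma~\ref{Lemma-denselystralgebrable} with $\mathcal{F}$ equal to the set of functions in $\mathcal{P}$ that are nonconstant on every non-degenerate interval. To do this I need to produce a single strictly monotone Pompeiu function $f$, nonconstant on every interval, with the additional feature that $\varphi \circ f$ remains in this class for every exponential-like $\varphi \in \mathcal{E}$. The natural candidate is (a variant of) Pompeiu's original function. First I would recall its construction: enumerate the rationals as $(q_n)_{n\ge 1}$, choose positive coefficients $c_n$ with $\sum_n c_n < \infty$, and set $g(x) = \sum_{n\ge 1} c_n (x - q_n)^{1/3}$; this series converges uniformly on compacta, $g$ is continuous and \emph{strictly increasing} (each summand is increasing and they are not all constant on any interval), and a classical argument shows $g$ is differentiable (with values in $(0,+\infty]$) everywhere, with $g'(q_n) = +\infty$ at each rational. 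The inverse function $f := g^{-1}: \R \to (a,b)$ (where $(a,b)$ is the range of $g$, possibly all of $\R$ after reparametrizing) is then strictly increasing, differentiable, with $f'$ vanishing on the dense set $\{g(q_n)\}$; and since $f$ is strictly monotone it is nonconstant on every interval. So $f \in \mathcal{F}$.

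The core verification is then that $\varphi \circ f \in \mathcal{F}$ for each exponential-like $\varphi(x) = \sum_{j=1}^m a_j e^{b_j x}$. Differentiability of $\varphi \circ f$ is immediate from the chain rule since $\varphi$ is entire and $f$ is differentiable, and $(\varphi \circ f)' = (\varphi' \circ f)\cdot f'$ vanishes wherever $f'$ does, i.e.\ on the same dense set — hence $\varphi \circ f \in \mathcal{P}$. It remains to check that $\varphi \circ f$ is nonconstant on every non-degenerate interval $J$. Since $f$ is strictly monotone, $f(J)$ is a non-degenerate subinterval of the range of $f$; if $\varphi \circ f$ were constant on $J$, then $\varphi$ would be constant on the non-degenerate interval $f(J)$, which is impossible because a nonzero exponential-like function, being real-analytic and not identically constant (its derivative $\sum a_j b_j e^{b_j x}$ is a nonzero exponential sum, hence has only isolated zeros), cannot be constant on any interval. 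Thus $\varphi \circ f \in \mathcal{F}$, and Lemma~\ref{Lemma-denselystralgebrable} applies to give that $\mathcal{F}$ is densely strongly $\mathfrak{c}$-algebrable in $\mathcal{C}(\R)$.

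I expect the main obstacle to be the rigorous justification that the Pompeiu-type function $g$ is genuinely differentiable \emph{everywhere} (including at the rationals, where the derivative is $+\infty$) and that consequently its inverse $f$ is differentiable everywhere with $f'$ vanishing exactly on the image of the rationals; this is the classical but slightly delicate part of Pompeiu's argument, requiring a careful estimate of difference quotients of the series $\sum c_n (x-q_n)^{1/3}$ term by term together with uniform control of the tail. One should also take a small amount of care that the range of $g$ can be arranged to be all of $\R$ (or simply note that Lemma~\ref{Lemma-denselystralgebrable} only needs $f$ defined and strictly monotone on $\R$, not surjective, so composing with an increasing analytic diffeomorphism of $(a,b)$ onto $\R$ — e.g.\ an affine map when $(a,b)$ is bounded, or leaving it as is — causes no trouble, since such a diffeomorphism does not create flat spots or destroy the vanishing of the derivative on a dense set). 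Everything else — linear/algebraic independence, density via Stone--Weierstrass — is delivered wholesale by Lemmas~\ref{Lemma-algebrabilitycriterium} and~\ref{Lemma-denselystralgebrable}.
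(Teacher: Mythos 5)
Your proposal follows the same overall route as the paper: both reduce the theorem to Lemma~\ref{Lemma-denselystralgebrable} applied to a single strictly monotone Pompeiu function $f$, and then check that $\varphi\circ f$ stays in the class for every exponential-like $\varphi$. Two differences are worth recording. First, where you rebuild Pompeiu's example from scratch via $g(x)=\sum_n c_n(x-q_n)^{1/3}$ and $f=g^{-1}$, the paper simply imports a known strictly increasing differentiable function $(a,b)\to(0,1)$ whose derivative vanishes on a dense set but not identically (Wise--Hall, Example 3.11; see also van Rooij--Schikhof, Example 13.3) and transplants it to $\R$ by composing with a rescaled $\arctan$; this sidesteps precisely the delicate everywhere-differentiability estimate for $g$ (including the points where $g'=+\infty$) that you correctly flag as your main outstanding obligation, as well as your domain/range bookkeeping for $g^{-1}$. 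That estimate is classical but would need to be carried out (or cited) for your write-up to be complete. Second, your verification that $\varphi\circ f$ is nonconstant on a given interval $J$ is genuinely cleaner than the paper's: you note that constancy of $\varphi\circ f$ on $J$ would force the analytic, nonconstant function $\varphi$ to be constant on the non-degenerate interval $f(J)$, which is impossible; the paper instead locates an explicit point $x_0\in J$ with $f(x_0)$ outside the discrete zero set of $\varphi'$ and $x_0$ outside the zero set of $f'$, so that $(\varphi\circ f)'(x_0)\neq 0$. Your injectivity argument is correct and shorter. The remaining ingredients (the free generators $\exp\circ\,(rf)$, density via Stone--Weierstrass) are delivered by the lemmas exactly as in the paper.
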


\begin{proof}
From \cite[Example 3.11]{wisehall1993} (see also \cite[Example 13.3]{vanrooijschikhof1982}) we know that there exists a derivable {\it strictly increasing} real-valued function \,$(a,b)\to (0,1)$ \, (with \,$f((a,b)) = (0,1)$) \,whose derivative vanishes on a dense set and yet does not vanish everywhere. By composition with the function \,$x \mapsto {b-a \over \pi} \arctan x + {a + b \over 2}$,
we get a strictly monotone function \,$f : \R \to \R$ \,satisfying that
$$D := \{x\in\R \,: \, f'(x)=0\}$$
is dense in \,$\R$ \,but \,$D \ne \R$. Observe that, in particular, $f$ \,is a Pompeiu function that is nonconstant on any interval.

\vskip .15cm

According to Lemma \ref{Lemma-denselystralgebrable}, our only task is to prove that, given a prescribed function
\,$\varphi \in {\mathcal E}$, the function \,$\varphi \circ f$ \,belongs to \,$\mathcal F$, where
$$
{\mathcal F} := \{f \in {\mathcal P}: \, f \hbox{ is nonconstant on any interval of } \, \R\}.
$$
By the chain rule, \,$\varphi\circ f$ \,is a differentiable function and
\,$(\varphi \circ f)'(x) = \varphi ' (f(x)) \, f'(x)$ $(x \in \R )$. Hence \,$(\varphi \circ f)'$ \,vanishes at least on \,$D$, so this derivative vanishes on a dense set.
It remains to prove that \,$\varphi\circ f$ \, is nonconstant on any open interval of \,$\R$.

\vskip .15cm

In order to see this, fix one such interval \,$J$. Clearly, the function \,$\varphi '$ \,also belongs to \,$\mathcal E$. Then \,$\varphi '$ \,is a nonzero entire function. Therefore the set $$S := \{x \in \R : \, \varphi ' (x) = 0\}$$
is discrete in \,$\R$. In particular, it is closed in \,$\R$ \,and countable, so \,$\R \setminus S$ \,is open and dense in \,$\R$. Of course, $S \cap (0,1)$ \,is discrete in \,$(0,1)$. Since \,$f:\R \to (0,1)$ \,is a homeomorphism, the set \,$f^{-1}(S)$ \,is discrete in \,$\R$. Hence
\,$J \setminus f^{-1}(S)$ \, is a nonempty open set of \,$J$.
On the other hand, since \,$D$ \, is dense in \,$\R$, it follows that the set \,$D^0$ of all interior points of \,$D$ \, is \,$\varnothing$. Indeed, if this were not true, there would exist an interval \,$(c,d) \subset D$. Then \,$f' = 0$ \,on \,$(c,d)$, so \,$f$ \,would be constant on \,$(c,d)$, which is not possible because \,$f$ \,is strictly increasing.
Therefore \,$\R \setminus D$ \,is dense in \,$\R$, from which one derives that \,$J \setminus D$ \,is dense in \,$J$.
Thus \,$(J \setminus f^{-1}(S)) \cap (J \setminus D) \neq \varnothing$. Finally, pick any point \,$x_0$ \,in the last set. This means that \,$x_0 \in J$, $f(x_0) \not\in S$
\,(so \,$\varphi ' (f(x_0)) \ne 0$) \,and \,$x_0 \notin D$ \,(so \,$f'(x_0) \ne 0$). Thus
$$
(\varphi\circ f)'(x_0) = \varphi'(f(x_0)) f'(x_0) \neq 0,
$$
which implies that \,$\varphi\circ f$ \, is nonconstant on \,$J$, as required.
\end{proof}

\vskip .1cm

\begin{remarks}
{\rm
%
\noindent 1. In view of the last theorem one might believe that the expression ``$f' = 0$ \,on a dense set'' (see the definition of \,$\mathcal P$) could be replaced by the stronger one
``$f' = 0$ \,almost everywhere''. But this is not possible because every differentiable function is an N-function --that is, it sends sets of null measure into sets of null measure--
(see \cite[Theorem 21.9]{vanrooijschikhof1982}) and every continuous N-function on an interval whose derivative vanishes almost everywhere must be a constant
(see \cite[Theorem 21.10]{vanrooijschikhof1982}).

\vskip .9pt

\noindent 2. If a real function \,$f$ \,is a derivative then \,$f^2$ \,may be not a derivative (see \cite[p.~86]{vanrooijschikhof1982}). This leads us to conjecture that the set
\,$\mathcal{DP}$ of Pompeiu derivatives (and of course, any subset of it) is not algebrable.

\vskip .9pt

\noindent 3. Nevertheless, from Theorem 3.6 (and also from Theorem 4.1) of \cite{gamezmunozsanchezseoane2010} it follows
that the family \,$\mathcal{BDP}$ \,of bounded Pompeiu derivatives is $\mathfrak c$-lineable. A quicker way to see this is by invoking the fact that
\,$\mathcal{BDP}$ \,is a vector space that becomes a Banach space under the supremum norm \cite[pp.~33--34]{bruckner1978}.
Since it is not finite dimensional, a simple application of Baire's category theorem yields \,dim$\,(\mathcal{BDP}) = \mathfrak{c}$. Now, on one hand, we have that, trivially,
\,$\mathcal{BDP}$ \,is dense-lineable in itself. On the other hand, it is known that the set of derivatives that are
positive on a dense set and negative on another is a dense $G_\delta$ set in the Banach space \,$\mathcal{BDP}$ \cite[p.~34]{bruckner1978}. Then, as the authors of  \cite{gamezmunozsanchezseoane2010} suggest, it would be interesting to see whether this set is also dense-lineable.
}
\end{remarks}

\section{Discontinuous functions}


\quad Let \,$n \ge 2$ \,and consider the function \,$f : \R^n \to \R$ \, given by
\begin{equation}\label{desc-func-cont-separately}
	f(x_1,\ldots,x_n)=
	\left\{
	\begin{array}{lll}
		\dfrac{x_1 \cdots x_n}{x_1^{2n} + \cdots + x_n^{2n}} & \text{if} & x_1^2 + \cdots + x_n^2 \neq 0, \\
		0 & \text{if} & x_1 = \cdots = x_n = 0.
	\end{array}
	\right.
\end{equation}
Observe that \,$f$ \,is discontinuous at the origin since arbitrarily near of \,$0 \in \R^n$ \,there exist points of the form \,$x_1 = \cdots = x_n=t$ \,at which \,$f$ \, has the value \,${1 \over nt^n}$. On the other hand, fixed \,$(x_1,\ldots,x_{i-1},x_{i+1},\ldots,x_n)\in\R^{n-1}$, the real-valued function of a real variable given by \,$\psi: x_i \mapsto f(x_1,\ldots,x_n)$ \,is everywhere a continuous function of \,$x_i$. Indeed, this is trivial if all \,$x_j$'s $(j \ne i)$ \,are not \,$0$, while \,$\psi \equiv 0$ \,if some \,$x_j = 0$. Of course, $f$ \,is continuous at any point of \,$\R^n \setminus \{0\}$.

\vskip .15cm

Given \,$x_0 \in \R^n$, we denote by \,$\mathcal{SC}(\R^n,x_0)$ \,the vector space of all {\it separately continuous} functions \,$\R^n \to \R$ \,that are {\it continuous on} \,$\R^n \setminus \{x_0\}$. Since \,card$\,({\mathcal C} (\R^n \setminus \{x_0\})) = \mathfrak{c}$, it is easy to see that
the cardinality (so the dimension) of \,$\mathcal{SC}(\R^n,x_0)$ \,equals \,$\mathfrak{c}$.
Theorem \ref{Thm-DSC(n)-c-algebrable} below will show the algebrability of the family
$$\mathcal{DSC} (\R^n,x_0):= \{f \in \mathcal{SC}(\R^n,x_0): \,f \hbox{ is discontinuous at } x_0\}$$
in a maximal sense.

\begin{theorem}\label{Thm-DSC(n)-c-algebrable}
Let \,$n \in \N$ \,with \,$n \ge 2$, and let \,$x_0 \in \R^n$. Then the set \,$\mathcal{DSC} (\R^n,x_0)$ \,is strongly $\mathfrak{c}$-algebrable.
\end{theorem}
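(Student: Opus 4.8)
The plan is to mimic the algebrability machinery of Lemma \ref{Lemma-algebrabilitycriterium}, using the explicit function $f$ from \eqref{desc-func-cont-separately} as the ``seed'' to which we apply exponential-like functions. Since $\mathcal{DSC}(\R^n,x_0)$ is translation-invariant in the obvious way (precomposing with $x\mapsto x+x_0$ carries $\mathcal{DSC}(\R^n,0)$ onto $\mathcal{DSC}(\R^n,x_0)$ and is an algebra isomorphism of $\R^{\R^n}$), it suffices to treat the case $x_0=0$. So fix the function $f$ of \eqref{desc-func-cont-separately}; we have already observed in the text that $f\in\mathcal{SC}(\R^n,0)$ and that $f$ is discontinuous at $0$. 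First I would check the hypotheses of Lemma \ref{Lemma-algebrabilitycriterium} with $\Omega=\R^n$ and $\mathcal{F}=\mathcal{DSC}(\R^n,0)$: namely that $f(\R^n)$ is uncountable (immediate, since $f$ takes the value $1/(nt^n)$ for every $t\neq 0$, giving an uncountable range) and, crucially, that $\varphi\circ f\in\mathcal{DSC}(\R^n,0)$ for every exponential-like $\varphi\in\mathcal{E}$.

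The heart of the proof is therefore establishing $\varphi\circ f\in\mathcal{DSC}(\R^n,0)$ for $\varphi\in\mathcal{E}$. There are three things to verify. (i) \emph{Separate continuity}: for fixed values of the other coordinates, $x_i\mapsto f(x_1,\dots,x_n)$ is continuous (shown in the text), and $\varphi$ is continuous on $\R$, so the composition $x_i\mapsto\varphi(f(x_1,\dots,x_n))$ is continuous; hence $\varphi\circ f$ is separately continuous. (ii) \emph{Continuity off the origin}: $f$ is continuous on $\R^n\setminus\{0\}$ and $\varphi$ is continuous, so $\varphi\circ f$ is continuous on $\R^n\setminus\{0\}$. (iii) \emph{Discontinuity at the origin}: this is the one genuine point requiring an argument. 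Along the diagonal ray $x_1=\cdots=x_n=t$ with $t\to 0^+$ we have $f(t,\dots,t)=t^n/(n t^{2n})=1/(n t^n)\to+\infty$, so $\varphi(f(t,\dots,t))=\varphi(1/(nt^n))$. An exponential-like function $\varphi(x)=\sum_{j=1}^m a_j e^{b_j x}$ with nonzero $a_j$ and distinct nonzero $b_j$ satisfies $|\varphi(u)|\to+\infty$ as $u\to+\infty$ (the term with the largest $b_j$ dominates), so $\varphi(f(t,\dots,t))\to\pm\infty$ as $t\to0^+$, while $\varphi(f(0))=\varphi(0)=\sum_j a_j$ is a finite real number. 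Hence $\varphi\circ f$ is not continuous at $0$. Combining (i)--(iii) gives $\varphi\circ f\in\mathcal{DSC}(\R^n,0)$.

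With the hypotheses of Lemma \ref{Lemma-algebrabilitycriterium} verified, that lemma immediately yields that $\mathcal{DSC}(\R^n,0)$ is strongly $\mathfrak{c}$-algebrable: choosing any $H\subset(0,\infty)$ with $\mathrm{card}(H)=\mathfrak{c}$ and $\Q$-linearly independent, the set $\{\exp\circ(rf):r\in H\}$ is a free system of generators of an algebra contained in $\mathcal{DSC}(\R^n,0)\cup\{0\}$. Transporting back by the translation isomorphism finishes the proof for general $x_0$. One should note that the result is in fact maximal, since $\mathrm{card}(\mathcal{SC}(\R^n,x_0))=\mathfrak{c}$ was observed before the theorem, so no algebra inside it can have more than $\mathfrak{c}$ generators.

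I expect the main (and really the only) obstacle to be step (iii) above — making sure that applying an arbitrary exponential-like $\varphi$, which need not be monotone or injective, still destroys continuity at the origin. This is handled cleanly by the diagonal-approach computation together with the elementary fact that $|\varphi(u)|\to\infty$ as $u\to+\infty$ for every $\varphi\in\mathcal{E}$; everything else (separate continuity, continuity off $x_0$, the reduction to $x_0=0$, and the cardinality bound) is routine. A small point worth stating explicitly in the write-up is that the value $\varphi(0)=\sum_{j=1}^m a_j$ is a fixed finite number independent of which ray we use, so the unboundedness along the diagonal genuinely contradicts continuity at $0$ regardless of the value $\varphi\circ f$ takes there.
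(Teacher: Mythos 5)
Your overall strategy -- reducing to $x_0=0$ and then invoking Lemma \ref{Lemma-algebrabilitycriterium} directly with $\Omega=\R^n$ and $\mathcal{F}=\mathcal{DSC}(\R^n,0)$ -- is legitimate and in fact more economical than the paper's argument, which does not use that lemma at all: the paper builds a free algebra by hand from the generators $\varphi_c(x)=e^{|x|^c}-e^{-|x|^c}$ ($c>0$), verifies freeness via an explicit asymptotic analysis of $P(\varphi_{c_1},\dots,\varphi_{c_p})$ as $x\to\infty$, and uses that same asymptotic blow-up along the diagonal to get discontinuity at the origin. Your route delegates all of the free-algebra bookkeeping to the lemma, at the price of having to verify $\varphi\circ f\in\mathcal{DSC}(\R^n,0)$ for \emph{every} $\varphi\in\mathcal{E}$. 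Steps (i) and (ii) and the cardinality remark are fine.

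However, step (iii) contains a genuine error. You assert that every exponential-like $\varphi(x)=\sum_{j=1}^m a_je^{b_jx}$ satisfies $|\varphi(u)|\to+\infty$ as $u\to+\infty$ because ``the term with the largest $b_j$ dominates.'' The definition of $\mathcal{E}$ only requires the $b_j$ to be distinct and nonzero; they may all be negative, in which case $\varphi(u)\to 0$ as $u\to+\infty$. For a concrete failure of your argument take $\varphi(x)=e^{-x}-e^{-2x}$: then $\varphi(0)=0$ and $\varphi(f(t,\dots,t))=\varphi\bigl(1/(nt^n)\bigr)\to 0$ as $t\to 0^+$, so the diagonal approach detects no discontinuity whatsoever. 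The gap is repairable without changing your framework: every $\varphi\in\mathcal{E}$ satisfies $|\varphi(u)|\to+\infty$ as $u\to+\infty$ \emph{or} as $u\to-\infty$ (according as $\max_j b_j>0$ or $\min_j b_j<0$, and at least one of these must occur since all $b_j\ne 0$), and $f$ takes values tending to $-\infty$ as well as $+\infty$ in every neighbourhood of the origin (e.g.\ $f(-t,t,\dots,t)=-1/(nt^n)$ while $f(t,\dots,t)=1/(nt^n)$). Choosing the ray matched to the sign for which $\varphi$ blows up yields $|\varphi\circ f|$ unbounded near $0$, which is the contradiction with continuity you want. As written, though, the proof of the one step you yourself identify as the heart of the matter does not go through.
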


\begin{proof}
We can suppose without loss of generality that \,$x_0 = 0 = (0,0, \dots ,0)$. Consider the function \,$f \in \mathcal{DSC} (\R^n,0)$ \,given by \eqref{desc-func-cont-separately}. For each \,$c > 0$, we set
$$
\varphi_c (x) := e^{|x|^c} - e^{-|x|^c}.
$$
It is easy to see that these functions generate a free algebra. Indeed, if \,$P(t_1, \dots ,t_p)$ \,is a nonzero polynomial in \,$p$ \,variables with \,$P(0,0, \dots ,0) = 0$ \,and \,$c_1, \dots , c_p$ \,are distinct positive real numbers, let \,$M := \{j \in \{1, \dots ,p\}:$ the variable \,$t_j$ \,appears explicitly in the expression of $P\}$, and \,$c_0 := \max \{c_j: \,j \in M\}$. Then one derives that the function
\,$P(\varphi_{c_1}, \dots , \varphi_{c_p})$ \,has the form \,$D \, e^{m|x|^{c_0} + g(x)} + h(x)$, where \,$D \in \R \setminus \{0\}$, $m \in \N$, $g$ \,is a finite sum of the form \,$\sum_k m_k |x|^{\al_k}$ \,with \,$m_k$ \,integers and \,$\al_k < c_0$, and \,$h$ \,is a finite linear combination of functions of the form \,$e^{q(x)}$ \,where, in turn, each \,$q(x)$ \,is a finite sum of the form \,$\sum_k n_k |x|^{\gamma_k}$, with each $\gamma_k$ satisfying that either $\gamma_k < c_0$, or $\gamma_k = c_0$ and $n_k < 0$ simultaneously. Then
\begin{equation}\label{limitofP}
\lim_{x \to \infty} |P(\varphi_{c_1}(x), \dots , \varphi_{c_p}(x))| = +\infty
\end{equation}
and, in particular, $P(\varphi_{c_1}, \dots , \varphi_{c_p})$ \,is not \,$0$ \,identically. This shows that the algebra \,$\Lambda$ \,generated by the \,$\varphi_c$'s \,is free.

\vskip .15cm

Now, define the set \,$\mathcal A$ \,as
$$
{\mathcal A} = \{\varphi \circ f : \, \varphi \in \Lambda \}.
$$
Plainly, $\mathcal A$ \,is an algebra of functions \,$\R^n \to \R$ \,each of them being continuous on \,$\R^n \setminus \{0\}$. But, in addition, this algebra is freely generated by the functions \,$\varphi_c \circ f$ $(c > 0)$. To see this, assume that
$$
\Phi = P(\varphi_{c_1} \circ f, \dots , \varphi_{c_p} \circ f) \in {\mathcal A},
$$
where $P, c_1, \dots ,c_p$ are as above. Suppose that $\Phi = 0$. Evidently, the function \,$f$ \,is onto (note that, for example, $f(x,x, \dots ,x) = {1 \over n \, x^n}$, $f(-x,x,x, \dots ,x)$ $= -{1 \over n \, x^n}$ \,and
$f(0, \dots ,0) = 0$). Therefore $P(\varphi_{c_1}(x), \dots , \varphi_{c_p}(x)) = 0$ \,for all \,$x \in \R$, so $P \equiv 0$, which is absurd because $P(\varphi_{c_1}, \dots ,\varphi_{c_p})$ becomes large as $x \to \infty$.

\vskip .15cm

Hence our only task is to prove that every function \,$\Phi \in \mathcal{A} \setminus \{0\}$ as in the last paragraph belongs to \,$\mathcal{DSC} (\R^n,0)$. Firstly, the continuity of each \,$\varphi_c$ \,implies that \,$\Phi \in \mathcal{SC}(\R^n,0)$. Finally, the function \,$\Phi$ \,is discontinuous at the origin. 
Indeed, we have for all \,$x \ne 0$ \,that
$$
|\Phi (x,x, \dots ,x)| = \left|P \big(\varphi_{c_1}({1 \over n \, x^n}), \dots ,\varphi_{c_p}({1 \over n \, x^n}) \big) \right| \longrightarrow +\infty
$$
as \,$x \to 0$, due to \eqref{limitofP}. This is inconsistent with continuity at \,$0$. The proof is finished.
\end{proof}

\section{Series for which the ratio test or the root test fail}

\quad Every real sequence \,$(a_n)$ \,generates a real series \,$\sum_n a_n$. In order to make the notation of this section consistent, we adopt the convention \,$a / 0 := \infty$ \,for every real number \,$a \in (0,\infty )$, and \,$0/0 := 0$. And a series \,$\sum_n a_n$ \,will be called divergent just whenever it does not converge.
As it is commonly known, given a series \,$\displaystyle \sum_n a_n$, a refinement of the classical {\it ratio test} states that
\begin{itemize}
	\item[(i)] if \,$\displaystyle \limsup_{n \to \infty} \frac{|a_{n+1}|}{|a_n|} < 1$ \,then \,$\displaystyle \sum_n a_n$ \,converges, and
	\item[(ii)] if \,$\displaystyle \liminf_{n \to \infty} \frac{|a_{n+1}|}{|a_n|} > 1$ \,then \,$\displaystyle \sum_n a_n$ \,diverges.
\end{itemize}
However, we can have convergent (positive) series for which $\displaystyle \limsup_{n \to \infty} \frac{a_{n+1}}{a_n} > 1$ and  $\displaystyle \liminf_{n \to \infty} \frac{a_{n+1}}{a_n} < 1$ simultaneously. For instance, consider the series
$$
\sum_n 2^{-n + (-1)^n} = \frac{1}{2^2} + \frac{1}{2} + \frac{1}{2^4} + \frac{1}{2^3} + \ldots = 1.
$$
Making \,$a_n = 2^{-n + (-1)^n}$, we have
$$
\limsup_{n \to \infty} \frac{a_{n+1}}{a_n}=2  \qquad \text{ and } \qquad \liminf_{n \to \infty} \frac{a_{n+1}}{a_n} = {1 \over 8}.
$$
Now, the series \,$\sum_n b_n = \sum_n 2^{n + (-1)^n}$ \,diverges with the same corresponding limsup and liminf.

\vskip .15cm

Analogously, a refinement of the classical {\it root test} asserts that
\begin{itemize}
	\item[(i)] if \,$\displaystyle \limsup_{n \to \infty} |a_n|^{1/n} < 1$ \,then \,$\displaystyle \sum_n a_n$ \,converges, and
	\item[(ii)] if \,$\displaystyle \limsup_{n \to\infty} |a_n|^{1/n} > 1$ \,then \,$\displaystyle \sum_n a_n$ \,diverges.
\end{itemize}
But no of these conditions is sufficient because, for instance, the positive series \,$\sum_n 1/n$ \,converges, the series \,$\sum_n n$ \,diverges but \,$\limsup_{n \to \infty} a_n^{1/n} = 1$ \,for both of them.

\vskip .15cm

Our goal in this section is to show that the set of convergent series for which the ratio test or the root test fails --that is, the refinements of both tests provide no information whatsoever-- is lineable in a rather strong sense; see Theorem \ref{Thm-test-series-lineability} below. The same result will be shown to happen for divergent series.

\vskip .15cm

In order to put these properties into an appropriate context, we are going to consider the space \,$\omega := \R^{\N}$ \,of all real sequences and its subset \,$\ell_1$, the space of all absolutely summable real sequences. Recall that \,$\omega$ \,becomes a Fr\'echet space under the product topology, while \,$\ell_1$ \,becomes a Banach space (so a Fr\'echet space as well) if it is endowed with the $1$-norm \,$\|(a_n)\|_1 := \sum_{n \ge 1} |x_n|$. Moreover, the set \,$c_{00} := \{(a_n) \in \omega : \, \exists n_0 = n_0((a_n)) \in \N$ \,such that \,$a_n = 0 \,\,\, \forall n > n_0\}$ \,is a dense vector subspace of both \,$\omega$ \,and \,$\ell_1$.
A standard application of Baire's category theorem together with the separability of these spaces yields that their dimension equals \,$\mathfrak{c}$.

\vskip .15cm

We need an auxiliary, general result about lineability. Let \,$X$ \,be a vector space and \,$A, \, B$ \,be two subsets of \,$X$. According to \cite{arongarciaperezseoane2009}, we say that \,{\it $A$ \,is stronger than \,$B$} \,whenever \,$A + B \subset A$. The following assertion --of which many variants have been proved-- can be found in \cite{aronbernalpellegrinoseoane2015,bernalpellegrinoseoane2014} and the references contained in them.

\begin{lemma} \label{maxdenslineable-criterium}
Assume that $X$ is a metrizable topological vector space. Let \,$A \subset X$ \,be a maximal lineable. Suppose that there exists a dense-lineable subset \,$B \subset X$ such that \,$A$ \,is stronger than \,$B$ \,and \,$A \cap B = \varnothing$. Then \,$A$ \,is maximal dense-lineable in \,$X$.
\end{lemma}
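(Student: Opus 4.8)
The plan is to produce a dense vector subspace $M \subset X$ with $\dim(M) = \dim(X)$ and $M \setminus \{0\} \subset A$, by "gluing together" a maximal linear space inside $A$ with a countable-dimensional dense space inside $B$. First I would invoke the hypotheses to fix two witnesses: since $A$ is maximal lineable, pick a vector space $W \subset X$ with $\dim(W) = \dim(X)$ and $W \setminus \{0\} \subset A$; since $B$ is dense-lineable, pick a dense vector subspace $V \subset X$ with $V \setminus \{0\} \subset B$. Because $X$ is metrizable (hence, as a topological vector space, first countable) and has a dense subset, $\dim(X)$ is infinite, and in fact one may assume $V$ is countably infinite-dimensional: any dense subspace contains a countable linearly independent subset whose span is still dense, so replace $V$ by that span if needed. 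The natural candidate is $M := W + V$.

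The key steps, in order, are as follows. First, $M$ is a vector subspace of $X$ containing the dense set $V$, hence $M$ is dense in $X$. Second, $\dim(M) = \dim(X)$: on one hand $\dim(M) \ge \dim(W) = \dim(X)$, and on the other hand $\dim(M) \le \dim(W) + \dim(V) = \dim(X) + \aleph_0 = \dim(X)$ (the last equality since $\dim(X)$ is infinite). Third, and this is the heart of the matter, I must check $M \setminus \{0\} \subset A$. Take $h \in M$, $h \ne 0$, and write $h = w + v$ with $w \in W$, $v \in V$. If $v = 0$ then $h = w \in W \setminus \{0\} \subset A$ and we are done. If $v \ne 0$, then $v \in B$; I claim $w \ne 0$ as well, for otherwise $h = v \in B$, and then $h \in A \cap B = \varnothing$ — impossible, since we will be showing $h \in A$; more directly, if $w = 0$ and $v \ne 0$ we need a separate argument, so here is where the disjointness is used carefully. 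Let me reorganize: the only problematic case is $w = 0$, $v \ne 0$, i.e. $h \in V \setminus \{0\} \subset B$; but nothing forces such an $h$ to lie in $A$, so in fact one does NOT take $M = W + V$ naively.

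The correct fix — and the step I expect to be the real obstacle — is to choose $V$ more cleverly so that every nonzero element of $W + V$ has a nonzero $W$-component. Concretely, let $\{e_n : n \in \N\}$ be a linearly independent set spanning a dense subspace, and fix any single nonzero vector $a_0 \in W \setminus \{0\} \subset A$ (so $a_0 \in A$); now set $V' := \operatorname{span}\{a_0 + e_n : n \in \N\}$. One checks $V'$ is still dense (it contains a dense set modulo the one-dimensional span of $a_0$, and a short argument using that $a_0 + e_n \to a_0$ is avoided — instead argue directly that the closure of $V'$ is a subspace containing all $e_n - e_m$ and, combined with density considerations, all of $X$; this is the routine-but-delicate point). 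Then redefine $M := W + V'$. Now if $h = w + v \in M \setminus\{0\}$ with $v = \sum_{n} c_n (a_0 + e_n) \in V'$, write $h = (w + (\sum c_n) a_0) + \sum c_n e_n =: w' + b$ where $w' \in W$ and $b \in B$ (when $b \ne 0$). If $b = 0$ then all $c_n = 0$, so $h = w \in W\setminus\{0\} \subset A$. If $b \ne 0$: were $w' = 0$ we would get $h = b \in B$; but also $h = w + v$ with $v \in \operatorname{span}\{a_0+e_n\}\subset$ something — this still doesn't immediately put $h$ in $A$. The clean resolution is: since $A$ is stronger than $B$ and $a_0 \in A$, for any $b \in B$ we have $a_0 + b \in A$; more generally $w' + b \in A$ whenever $w' \in A$. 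So it suffices that $w' \ne 0$, equivalently that $w + (\sum c_n) a_0 \ne 0$. If this fails, then $h = b \in B$, and we must rule this out — but $h \in M$ need not avoid $B$ unless we arrange $W \cap (\text{the relevant part}) $ appropriately. I would finish by noting that one can choose the $e_n$ inside a complement of $W$ large enough, or simply observe that the standard formulation of this lemma takes $M = W \oplus V$ with $V$ chosen so that $V \cap W = \{0\}$ and handles the case $h \in V\setminus\{0\}$ by replacing $V$ with $\{a_0\} + V$-type shifts so that $M\setminus\{0\}$ lands in $A = A + (A\cup\{0\}) \supset A + B$; the verification that this shifted space is still dense and still $\dim(X)$-dimensional, together with $(\{a_0\}+V')\setminus\{0\}\subset A+B\subset A$ via "$A$ stronger than $B$", completes the proof. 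The delicate part throughout is juggling density, dimension count, and the disjointness $A \cap B = \varnothing$ simultaneously while performing the shift by $a_0$.
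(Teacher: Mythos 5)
The paper does not actually prove Lemma \ref{maxdenslineable-criterium}; it states it and cites \cite{arongarciaperezseoane2009} and the surveys, so I am measuring your attempt against the standard argument from those sources. You correctly isolate the real difficulty --- a naive sum $W+V$ contains $V\setminus\{0\}\subset B$, which is disjoint from $A$ --- but your proposed repair, shifting every generator by one fixed vector $a_0\in W\setminus\{0\}$, does not work, and you essentially admit this without resolving it. Two concrete failures: first, $V'=\operatorname{span}\{a_0+e_n\}$ contains $(a_0+e_1)-(a_0+e_2)=e_1-e_2$, a nonzero element of $B$, so $M\setminus\{0\}\not\subset A$ no matter how the $e_n$ are chosen; second, $V'$ need not be dense --- if $f$ is a continuous linear functional with $f(e_n)=-f(a_0)\neq 0$ for all $n$ (which is compatible with $\operatorname{span}\{e_n\}$ being dense), then $V'\subset\ker f$, a proper closed hyperplane. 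A telling symptom is that your argument never uses the metrizability hypothesis, and your preliminary reduction to a countable-dimensional dense $V$ silently assumes separability, which is not part of the statement.

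The missing idea is that each generator of the dense subspace must be perturbed by a \emph{distinct} element of $W$, scaled so the perturbation is small. Fix a translation-invariant metric $d$ on $X$ (Birkhoff--Kakutani), a Hamel basis $\{v_j\}_{j\in J}$ of $V$, and a linearly independent family $\{w_{j,n}\}_{j\in J,\,n\in\N}$ in $W$ (possible since $|J|\cdot\aleph_0\le\dim W=\dim X$). Choose $\varepsilon_{j,n}>0$ with $d(\varepsilon_{j,n}w_{j,n},0)<1/n$ and set $M_0:=\operatorname{span}\{v_j+\varepsilon_{j,n}w_{j,n}\}$. Density holds because $v_j+\varepsilon_{j,n}w_{j,n}\to v_j$ as $n\to\infty$, so $\overline{M_0}\supset V$. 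Any element of $M_0$ splits as $b+a$ with $b\in V$, $a\in W$, and --- this is the point your single shift cannot deliver --- $a=0$ forces \emph{all} coefficients to vanish by the linear independence of the $w_{j,n}$, hence $b=0$ too; otherwise $a\in A$ and $b+a\in A$ whether $b=0$ or $b\in B$ (using $A+B\subset A$). Maximality is obtained by adjoining $\dim X$ further independent vectors of $W$, the same independence argument showing the enlarged space still avoids $B$. Note that $A\cap B=\varnothing$ is not even needed in this route; it is the disjoint indexing, not the disjointness of the sets, that prevents nonzero elements from falling into $B$.
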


\begin{theorem} \label{Thm-test-series-lineability}
The following four sets are maximal dense-lineable in \,$\ell_1$, $\ell_1$, $\omega$ \,and \,$\omega$, respectively:
\begin{enumerate}
\item [\rm (a)] The set of sequences in \,$\ell_1$ \,for whose generated series the ratio test fails.
\item [\rm (b)] The set of sequences in \,$\ell_1$ \,for whose generated series the root test fails.
\item [\rm (c)] The set of sequences in \,$\omega$ \,whose generated series diverges and the ratio test fails.
\item [\rm (d)] The set of sequences in \,$\omega$ \,whose generated series diverges and the root test fails.
\end{enumerate}
\end{theorem}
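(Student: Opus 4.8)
The plan is to derive all four statements from a single application of Lemma~\ref{maxdenslineable-criterium}, with the same auxiliary set $B:=c_{00}$ in every case. Writing $A$ for the set in question and $X$ for its ambient space ($X=\ell_1$ in (a), (b) and $X=\omega$ in (c), (d) --- both metrizable topological vector spaces of dimension $\mathfrak c$), I would check: (1) $A$ is maximal lineable in $X$, i.e.\ $\mathfrak c$-lineable; (2) $c_{00}$ is dense-lineable in $X$; (3) $A\cap c_{00}=\varnothing$; (4) $A$ is stronger than $c_{00}$. Point (2) is free, since $c_{00}$ is a dense infinite-dimensional vector subspace of both $\ell_1$ and $\omega$, hence witnesses its own dense-lineability. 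Lemma~\ref{maxdenslineable-criterium} then gives that $A$ is maximal dense-lineable in $X$.

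For (1) I would use power sequences. Let $u_s:=(n^{-s})_{n\ge1}$. In cases (a), (b) pick $T\subset(1,\infty)$ with $\operatorname{card}(T)=\mathfrak c$ and set $V:=\operatorname{span}\{u_s:s\in T\}\subset\ell_1$; in cases (c), (d) pick instead $T\subset(0,1)$ and set $V:=\operatorname{span}\{u_s:s\in T\}\subset\omega$. The engine of the argument is a single asymptotic fact: if $b=\sum_{i=1}^p c_iu_{s_i}$ is a nonzero combination (vanishing coefficients discarded, $s_1<\cdots<s_p$, $c_1\ne0$), then factoring out $n^{-s_1}$ gives $b_n=n^{-s_1}\bigl(c_1+o(1)\bigr)$ as $n\to\infty$. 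This at once yields: $b_n\ne0$ for all large $n$ --- hence the $u_s$ are linearly independent (so $\dim V=\mathfrak c$) and the test quantities below are eventually well defined; $|b_{n+1}|/|b_n|\to1$, so $\limsup$ and $\liminf$ of the ratios both equal $1$ and the ratio test fails; $|b_n|^{1/n}\to1$, so $\limsup|b_n|^{1/n}=1$ and the root test fails; and, for the summability dichotomy, $\sum|b_n|\le\sum_i|c_i|\sum_n n^{-s_i}<\infty$ when $T\subset(1,\infty)$, whereas $b_n\sim c_1n^{-s_1}$ is eventually of constant sign with $\sum n^{-s_1}=+\infty$ when $T\subset(0,1)$, so $\sum_n b_n$ diverges. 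Hence $V\setminus\{0\}$ lies in both sets of (a) and (b) in the first case, and in both sets of (c) and (d) in the second.

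It remains to verify (3) and (4). For (3): a finitely supported sequence generates a series with eventually constant partial sums, hence a convergent one, so it is not in the sets of (c) or (d); and for it $|a_{n+1}|/|a_n|=0$ and $|a_n|^{1/n}=0$ for all large $n$ (using $a/0=\infty$ and $0/0=0$), so $\limsup|a_{n+1}|/|a_n|=\limsup|a_n|^{1/n}=0<1$ and both refined tests \emph{succeed}, excluding it from the sets of (a) and (b). For (4): if $a\in A$ and $b\in c_{00}$, then $a+b$ agrees with $a$ from some index on, so the ratio- and root-test quantities of $a+b$ coincide with those of $a$, while $\sum(a_n+b_n)$ differs from $\sum a_n$ by a finite constant and so diverges whenever $\sum a_n$ does; therefore $a+b$ fails the same test (and still generates a divergent series in (c), (d)), i.e.\ $A+c_{00}\subset A$. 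Together with $A\cap c_{00}=\varnothing$ from (3), Lemma~\ref{maxdenslineable-criterium} applies in each of the four cases. The only delicate point is the simultaneous bookkeeping in step (1): from the single estimate $b_n=n^{-s_1}(c_1+o(1))$ one must extract eventual non-vanishing, the ratio limit, the root limit, and the correct summability behaviour of an arbitrary finite power-sum; everything else is routine.
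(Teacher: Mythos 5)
Your proof is correct, and its architecture coincides with the paper's: exhibit a $\mathfrak{c}$-dimensional subspace of explicit power-type sequences inside each set, then upgrade to maximal dense-lineability via Lemma~\ref{maxdenslineable-criterium} with $B=c_{00}$, checking $A+c_{00}\subset A$ and $A\cap c_{00}=\varnothing$ exactly as you do. The one genuine difference is the choice of generators. The paper uses the oscillating family $a_{n,s}=(ns)^{-n+(-1)^n}$ ($s>1$) for part (a) (and $(ns)^{n+(-1)^n}$ for part (c)), which yields the stronger conclusion $\limsup_{n}|a_{n+1}|/|a_n|=\infty$ together with $\liminf_{n}|a_{n+1}|/|a_n|=0$ on the whole subspace, i.e.\ the ratio test fails in the most extreme way possible; it reserves $\{n^{-s}\}$ and $\{n^{s}\}$ for the root-test parts (b) and (d). You instead use the single family $(n^{-s})_{n\ge1}$ throughout, with $s$ ranging over a $\mathfrak{c}$-sized subset of $(1,\infty)$ or of $(0,1)$, so that every nonzero combination satisfies $b_n=n^{-s_1}\bigl(c_1+o(1)\bigr)$ and both test quantities tend to $1$. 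This is simpler and treats all four parts uniformly, at the cost of proving only the literal statement (limits equal to $1$) rather than the paper's extremal version of (a) and (c); since the theorem only asks that the refined tests give no information, your argument fully suffices. The auxiliary verifications --- linear independence via the dominant term, summability for $s_i>1$, divergence by eventual constant sign when $0<s_1<1$, and the tail-invariance arguments establishing that $A$ is stronger than $c_{00}$ and disjoint from it --- are all sound.
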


\begin{proof}
We shall only show the first item, even in a very strong form. Namely, our aim is to prove that the set
$$
{\mathcal A} := \left\{(a_n) \in \ell_1: \limsup_{n \to \infty} \frac{|a_{n+1}|}{|a_n|} = \infty \hbox{ \ and \ }
\liminf_{n \to \infty} \frac{|a_{n+1}|}{|a_n|} = 0\right\}
$$
is maximal dense-lineable. The remaining items can be done in a similar manner and are left to the reader: as a hint, suffice it to say that,
instead of the collection of sequences \,$\{(ns)^{-n+(-1)^n}\}$ \,used for (a), one may use \,$\{n^{-s}\}$, $\{(ns)^{n+(-1)^n}\}$ \,and \,$\{n^{s}\}$, respectively, to prove (b), (c) and (d).

\vskip .15cm

Let us prove (a). Consider, for every real number \,$s>1$, the positive sequence \,$a_{n,s} = (ns)^{-n+(-1)^n}$ \,for \,$n \in \mathbb{N}$.
Since \,$a_{n,s} \le n^{-2}$ \,for all \,$n \ge 3$, the comparison test yields \,$(a_{n,s}) \in \ell_1$. Next, take
$$
E = \text{span}\{(a_{n,s})_{n \ge 1}: \, s>1\},
$$
which is a vector subspace of \,$\ell_1$. It can be easily seen that \,dim($E$)$=\mathfrak{c}$. Indeed, suppose that a linear combination of the type
\begin{equation}\label{linearcombination}
x = (x_n) \hbox{ \ with \ } x_n = \sum_{j=1}^{k} \alpha_j \, a_{n, s_j} \quad (n \ge 1)
\end{equation}
is identically \,$0$. Then, supposing without loss of generality that \,$k \ge 2$ \,and \,$s_1 > s_2 > \cdots > s_k$, and dividing the previous expression by \,$(ns_k)^{-n+(-1)^n}$ \,we obtain
$$
0 = \alpha_1 \left( \frac{s_1}{s_k}\right)^{-n+(-1)^n} + \cdots + \alpha_{k-1} \left( \frac{s_{k-1}}{s_k}\right)^{-n+(-1)^n}   + \alpha_k .
$$
Taking limits in the previous expression, as \,$n$ \,goes to \,$\infty$, we have \,$\alpha_k = 0$. Inductively we can obtain that all \,$\alpha_j$'s \,are \,$0$, having that the set of sequences \,$\{a_{n,s}, s>1\}$ \,is linearly independent, thus \,dim($E$)$ = \mathfrak{c}$.

\vskip .15cm

Next, let us show that, given any sequence \,$x = (x_n)_{n \ge 1} \in E \setminus \{0\}$ \,as in (\ref{linearcombination})
(with \,$\alpha_k \ne 0$ \,and \,$s_1 > \cdots > s_k$),
the ratio test does not provide any information on the convergence of \,$\displaystyle \sum_{n \ge 1} x_n$.
Dividing numerators and denominators by \,$\alpha_k (ns_k)^{-n+(-1)^{n}}$, we get
$$
\begin{array}{rl}
\displaystyle \left|\frac{x_{n+1}}{x_n} \right| =&
\displaystyle \left| \frac{\alpha_1 ((n+1)s_1)^{-n-1+(-1)^{n+1}} +  \cdots + \alpha_k ((n+1)s_k)^{-n-1+(-1)^{n+1}}}{\alpha_1 (ns_1)^{-n+(-1)^{n}} + \cdots + \alpha_k (ns_k)^{-n+(-1)^{n}}}\right| = \\
\medskip
= & \displaystyle \left| \frac{\beta_{1,n} + \cdots + \beta_{k,n}}{\gamma_n + 1}\right|,
\end{array}
$$
where  \,$\gamma_n \to 0$ $(n \to \infty)$, $\beta_{j,n} = {\alpha_j \over \alpha_k} \, \big({n \over n+1}\big)^n \, {s_j^{-2} \, s_k^{-1} \over n(n+1)} \, \big({s_k \over s_j}\big)^n$
\,if \,$n$ \,is even, and \,$\beta_{j,n} = {\alpha_j \over \alpha_k} \, \big({n \over n+1}\big)^n \,n \, s_k \, \big({s_k \over s_j}\big)^n$
\,if \,$n$ \,is odd ($j=1, \dots ,k$). Note that \,$\lim_{n \to \infty \atop n \,\, {\rm even}} \beta_{j,n} = 0$ \,for all \,$j \in \{1, \dots ,k\}$,
$\lim_{n \to \infty \atop n \,\, {\rm odd}} \beta_{j,n} = 0$ \,for all \,$j \in \{1, \dots ,k-1\}$, and \,$\lim_{n \to \infty \atop n \,\, {\rm odd}} |\beta_{k,n}| = \infty$. Then
$$
\liminf_{n \to \infty} \left|\frac{x_{n+1}}{x_n} \right| = 0 \qquad \text{ and } \qquad \limsup_{n \to \infty} \left|\frac{x_{n+1}}{x_n} \right| = \infty .
$$
Consequently, $x$ \,belongs to \,$\mathcal A$, as we wished. This shows that \,$\mathcal A$ \,is maximal lineable in \,$\ell_1$.

\vskip .15cm

Finally, an application of Lemma \ref{maxdenslineable-criterium} with \,$X = \ell_1$, $A = {\mathcal A}$ \,and \,$B = c_{00}$ \,proves the
maximal dense-lineability of \,$\mathcal A$.
\end{proof}

Concerning parts (c) and (d) of the last theorem, one might believe that they happen because root and ratio test are specially non-sharp criteria. To be more precise, given a divergent series \,$\sum_n c_n$ \,with positive terms (notice that we may have $c_n \to 0$, for instance with \,$c_n = 1/n$), one might believe that there are not many sequences \,$(x_n)$ \,essentially lower that \,$(c_n)$ \,such that \,$\sum_n x_n$ \,still diverges. The following theorem will show that this is far from being true. In order to formulate it properly, a piece of notation is again needed. For a given sequence \,$(c_n) \subset (0, \infty )$, we denote by \,$c_0((c_n))$ \,the vector space of all sequences \,$(x_n) \in \omega$ \,satisfying \,$\lim_{n \to \infty} x_n/c_n = 0$. It is a standard exercise to prove that, when endowed with the norm
$$\
\|(x_n)\| = \sup_{n \ge 1} |x_n/c_n|,
$$
the set \,$c_0((c_n))$ \,becomes a separable Banach space, such that \,$c_{00}$ \,is a dense subspace of it.

\begin{theorem} \label{Thmc_0((c_n))maxdenslineable}
Assume that \,$(c_n)$ \,is a sequence of positive real numbers such that the series \,$\sum_n c_n$ \,diverges. Then the family of sequences \,$(x_n) \in c_0((c_n))$ \,such that the series \,$\sum_n x_n$ \,diverges is maximal dense-lineable in \,$c_0((c_n))$.
\end{theorem}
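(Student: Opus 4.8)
The plan is to invoke Lemma~\ref{maxdenslineable-criterium} with $X = c_0((c_n))$, with $A$ the family described in the statement, and with $B = c_{00}$. First observe that, being a separable infinite-dimensional Banach space, $c_0((c_n))$ has dimension $\mathfrak{c}$ by Baire's theorem, so ``maximal'' here means $\mathfrak{c}$. The subspace $B = c_{00}$ is dense in $X$ and infinite-dimensional, hence dense-lineable; $A \cap B = \varnothing$ because a finitely supported series converges; and $A$ is stronger than $B$ since adding a finitely supported sequence to $(x_n)$ alters $\sum_n x_n$ only by a finite sum, so preserves divergence, giving $A + B \subset A$. Therefore the theorem will follow once we prove that $A$ is maximal lineable, i.e.\ $\mathfrak{c}$-lineable.

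To construct a $\mathfrak{c}$-dimensional $M$ with $M \setminus \{0\} \subset A$, I would combine a block-rescaling with a concrete continuum-sized test family. Since $\sum_n c_n = \infty$, partition $\N$ greedily into consecutive finite blocks $G_1 < G_2 < \cdots$ with $S_k := \sum_{n \in G_k} c_n \ge 1$ for every $k$. On the other side, put $N := \operatorname{span}\{(k^{-s})_{k \ge 1} : 0 < s < 1\} \subset c_0$; a routine asymptotics argument (multiply a vanishing linear combination by a suitable power of $k$ and let $k \to \infty$) shows these generators are linearly independent, so $\dim N = \mathfrak{c}$, and that any nonzero $\lambda = \sum_{j=1}^m \alpha_j (k^{-s_j})$ with $0 < s_1 < \cdots < s_m < 1$ and $\alpha_1 \ne 0$ satisfies $\lambda_k \sim \alpha_1 k^{-s_1} \to 0$, whence $\sum_k \lambda_k = \pm\infty$ by limit comparison with $\sum_k k^{-s_1}$ (which diverges since $s_1 < 1$). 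Now let $T$ be the linear map sending $\lambda = (\lambda_k) \in N$ to the sequence $x = (x_n)$ defined by $x_n := \lambda_k\, c_n / S_k$ whenever $n \in G_k$, and set $M := T(N)$.

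It then remains to verify the three properties of $M$. Injectivity of $T$ is clear since $\lambda_k = \sum_{n \in G_k} x_n$, so $\dim M = \mathfrak{c}$. For $(x_n) = T(\lambda) \in M$ we have $|x_n/c_n| = |\lambda_k|/S_k \le |\lambda_k| \to 0$ as $n \to \infty$, so $(x_n) \in c_0((c_n))$. Finally, for the divergence of $\sum_n x_n$ when $\lambda \ne 0$: inside a block $G_k$ every partial sum equals $\lambda_k$ times a ratio lying in $[0,1]$, hence has absolute value at most $|\lambda_k|$; therefore, for $r$ lying in block $G_{K+1}$, one has $\sum_{n \le r} x_n = \sum_{k=1}^{K} \lambda_k + \theta$ with $|\theta| \le |\lambda_{K+1}|$, and since $\sum_{k=1}^K \lambda_k \to \pm\infty$ while $\lambda_{K+1} \to 0$ the partial sums of $\sum_n x_n$ tend to $\pm\infty$. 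Thus $M \setminus \{0\} \subset A$, $A$ is $\mathfrak{c}$-lineable, and Lemma~\ref{maxdenslineable-criterium} yields the maximal dense-lineability of $A$ in $c_0((c_n))$.

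The only real difficulty is exhibiting a single $\mathfrak{c}$-dimensional subspace all of whose nonzero members simultaneously lie in $c_0((c_n))$ and generate divergent series; the block-rescaling device handles this by transporting the transparent family $\{(k^{-s})\}_{0<s<1}$ into $c_0((c_n))$ while the normalisation $S_k \ge 1$ only shrinks the quotients $x_n/c_n$. A minor point to be careful about is bounding the within-block fluctuation of the partial sums by $|\lambda_k|$, which tends to $0$; everything else is bookkeeping.
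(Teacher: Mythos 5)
Your proof is correct and follows essentially the same route as the paper: the same reduction via Lemma~\ref{maxdenslineable-criterium} with $B=c_{00}$, the same block decomposition of $\N$ driven by the divergence of $\sum_n c_n$, and the same continuum family of power-law weights $k^{-s}$, $0<s<1$. The only (harmless) difference is cosmetic: you normalise each block by $S_k$ so that the block sums equal $\lambda_k\to 0$ and divergence is read off from the unbounded partial sums $\sum_{k\le K}\lambda_k$, whereas the paper chooses blocks with $\sum_{j\in G_k}c_j>k$ and weights $c_j/k^t$, so that the individual block sums themselves blow up and the Cauchy criterion fails directly.
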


\begin{proof}
By Baire's theorem, dim$(c_0((c_n))) = \mathfrak{c}$. We denote
$$
A := \big\{(x_n) \in c_0((c_n)): \hbox{ the series} \, \sum_n x_n \hbox{ diverges}\big\}.
$$
Obviously, $A + c_{00} \subset A$ \, and \,$A \cap c_{00} = \varnothing$.
Let us apply Lemma \ref{maxdenslineable-criterium} with
$$
X = c_0((c_n)) \hbox{ \ and \ } B = c_{00}.
$$
Then it is enough to show that \,$A$ \,is maximal lineable, that is, $\mathfrak{c}$-lineable.

\vskip .15cm

To this end, we use the divergence of \,$\sum_n c_n$ \,and the fact \,$c_n > 0$ $(n \ge 1)$. Letting \,$n_0 := 1$, we can obtain inductively a sequence \,$\{n_1 < \cdots < n_k < \cdots \} \subset \N$ \,satisfying
$$
c_{n_{k-1}+1} + \cdots + c_{n_k} > k \quad (k = 1,2, \dots ).
$$
Now, define the collection of sequences \,$\{(d_{n,t})_n: \, 0 < t < 1\}$ \,by
$$
d_{j,t} := {c_j \over k^t} \quad (j=n_{k-1}+1, \dots ,n_k; \,k \in \N ).
$$
Since \,$k^t \to \infty$ \,as \,$k \to \infty$, each sequence \,$(d_{n,t})_n$ \,belongs to \,$c_0((c_n))$.
We set
$$M := {\rm span} \, \{(d_{n,t})_n: \, 0 < t < 1\}.$$
This vector space is $\mathfrak{c}$-dimensional. Indeed, if this were not the case, then there would exist \,$s \in \N$, $\la_1, \dots , \la_s \in \R$
\,with \,$\la_1 \ne 0$ \, and \,$0 < t_1 < \cdots < t_s < 1$ \,such that the sequence
$$
\Phi = (x_n)_n = (\la_1 d_{n,t_1} + \cdots + \la_s d_{n,t_2})_n \leqno (6.2)
$$
is identically zero. From the triangle inequality, we obtain for each \,$k \in \N$ \,that
\begin{equation*}
\begin{split}
\left| \sum_{j=n_{k-1}+1}^{n_k} x_j \right| &= \left| \sum_{j=n_{k-1}+1}^{n_k} \sum_{\nu = 1}^s \la_{\nu} d_{j,t_\nu} \right| \\
                                            &\ge \left| |\la_1| \sum_{j=n_{k-1}+1}^{n_k} d_{j,t_1} - \sum_{\nu = 2}^s |\la_\nu | \sum_{j=n_{k-1}+1}^{n_k} d_{j,t_\nu} \right|\\
                                            & =\left(\sum_{j=n_{k-1}+1}^{n_k} c_j \right) \left| {|\la_1| \over k^{t_1}} - \sum_{\nu = 2}^s   {|\la_\nu| \over
                                               k^{t_\nu}}  \right| \\
&> \left| |\la_1| \, k^{1 - t_1} - \sum_{\nu = 2}^s |\la_\nu| \, k^{1 - t_\nu} \right| \longrightarrow \infty \hbox{ \ as } k \to \infty ,
\end{split}
\end{equation*}
which is absurd. Hence, the sequences \,$(d_{n,t})$ $(0<t<1)$ \,are linear independent and \,dim$(M) = \mathfrak{c}$. Finally, we prove that each
\,$\Phi = (x_n) \in M \setminus \{0\}$ \,belongs to \,$A$. Note that such a sequence \,$\Phi$ \,has the shape given in (6.2), with \,$\la_1 \ne 0$ \, and \,$0 < t_1 < \cdots < t_s < 1$. But the fact \,$| \sum_{j=n_{k-1}+1}^{n_k} x_j | \to \infty$ $(k \to \infty )$ \,as shown above entails that the Cauchy convergence criterium for series does not hold for \,$\sum_n x_n$. Consequently, this series diverges, as required.
\end{proof}

\begin{remark}
{\rm The property given in Theorem \ref{Thmc_0((c_n))maxdenslineable} is topologically generic too, that is, the set \,$A$ \,above is {\it residual} in
\,$c_0((c_n))$. Indeed, we have that \,$A \supset \bigcap_{M \in \N} \bigcap_{N \in \N} \bigcup_{m > n > N} A_{M,N}$, where
$$
A_{M,N} := \big\{(x_n) \in c_0((c_n)): \, |x_n + x_{n+1} + \cdots + x_m| > M\big\},
$$
and each set \,$B_{M,N} := \bigcup_{m > n > N} A_{M,N}$ \,is open and dense in \,$c_0((c_n))$. To prove this, fix \,$M,N,m,n \in \N$ \,with \,$m > n > N$\ and observe  that \,$A_{M,N} = c_0((c_n)) \cap \Psi^{-1} ((-\infty ,-M) \cup (M, \infty ))$, where \,$\Psi : \omega \to \R$ \,is given by \,$\Psi ((x_j)) = x_m + \cdots + x_n$. The continuity of the projections \,$(x_j) \in \omega \mapsto x_k \in \R$ $(k \in \N )$ \,entails the continuity of \,$\Psi$, so \,$\Psi^{-1} ((-\infty ,-M) \cup (M, \infty ))$ \,is open in \,$\omega$. The inclusion \,$c_0((c_n)) \hookrightarrow \omega$ \,being continuous, we get that \,$A_{M,N}$ \,is open in \,$c_0((c_n))$. Therefore \,$B_{M,N}$ \,is also open in \,$c_0((c_n))$. As for the density of \,$B_{M,N}$, note that, due to the density of \,$c_{00}$ \,in \,$c_0((c_n))$, it is enough to show that, given \,$\Phi = (y_j) = (y_1,y_2, \dots ,y_s,0,0,0, \dots ) \in c_{00}$ \,and \,$\ve > 0$, there exist
\,$m > n > N$ \,and \,$(x_j) \in c_0((c_n))$ \,with \,$|x_m + \cdots + x_n| > M$ \,and \,$\|(x_j) - \Phi \| < \ve$. From the divergence of the positive series \,$\sum_j c_j$, it follows the existence of \,$m,n \in \N$ \,such that \,$m > n > \max \{s,N\}$ \,and \,$c_n + \cdots + c_m > 2M/\ve$.
Define the sequence \,$(x_j) \in c_{00} \subset c_0((c_n))$ \,by
$$
x_j =
	\left\{
	\begin{array}{ll}
		y_j & \text{if \ }  j \in \{1, \dots ,s\}, \\
		{\ve \, c_j \over 2} & \text{if \ } j \in \{m, \dots ,n\}, \\
        0 & \text{otherwise.}
	\end{array}
	\right.
$$
By construction, $|x_m + \cdots + x_n| > M$. Finally, $\|(x_j) - \Phi \| = \sup_{j \ge 1} {|x_j - y_j| \over c_j} =  \sup_{n \le j \le m} {|x_j| \over c_j} =   {\ve \over 2} < \ve$, as required.}
\end{remark}

\section{Convergence in measure versus convergence almost everywhere}

\quad Let \,$m$ \,be the Lebesgue measure on \,$\R$. In this section we will restrict ourselves to the interval \,$[0,1]$, which of course has finite measure \,$m([0,1]) = 1$. Denote by \,$L_0$ \,the vector space of all Lebesgue measurable functions \,$[0,1] \to \R$, where two functions are identified whenever they are equal almost everywhere (a.e.) in \,$[0,1]$. Two natural kinds of convergence of functions of \,$L_0$ \,are a.e.-convergence and convergence in measure. Recall that a sequence \,$(f_n)$ \,of measurable functions is said to converge in measure to a measurable function \,$f: [0,1] \to \R$ \,provided that
$$
\lim_{n \to \infty} m(\{x \in [0,1] : \, |f_n(x) - f(x)| > \alpha \}) = 0 \hbox{ \ for all \ } \alpha > 0.
$$
Convergence in measure is specially pleasant because it can be described by a natural metric on \,$L_0$; see e.g.~\cite{nielsen1997}. Namely, the distance
$$
\rho (f,g) = \int_{[0,1]} {|f(x) - g(x)| \over 1 + |f(x) - g(x)|} \,dx \quad (f,g \in L_0)
$$
satisfies that \,$f_n  \mathop{\longrightarrow}\limits^{\rho}_{n \to \infty} f$ \,if and only if \,$f_n \mathop{\longrightarrow}\limits_{n \to \infty} f$ \,in measure (the finiteness of the measure of $[0,1]$ is crucial).
Under the topology generated by \,$\rho$, the space \,$L_0$ \,becomes a complete metrizable topological vector space for which the set \,${\mathcal S}$ \,of
simple (i.e.~of finite image) measurable functions forms a dense vector subspace. Actually, $L_0$ \,is separable because the set \,$S_0 \,\, (\subset S)$ \,of finite linear combinations with rational coefficients of functions of the form \,$\chi_{[p,q]}$ ($0 \le p < q \le 1$ rational numbers) is countable and dense in \,$L_0$. Here \,$\chi_A$ \,denotes the indicator function of the set \,$A$.

\vskip .15cm

Convergence in measure of a sequence \,$(f_n)$ \,to \,$f$ \,implies a.e.-convergence to \,$f$ \,of some subsequence \,$(f_{n_k})$ (see \cite[Theorem 21.9]{nielsen1997}). But, generally, this convergence cannot be obtained for the whole sequence \,$(f_n)$. For instance, the so-called ``typewriter sequence'' given by
$$T_n = \chi_{[j2^{-k},(j+1)2^{-k}]}$$
(where, for each $n$, the non-negative integers $j$ and $k$ are uniquely determined by $n=2^k +j$ and $0 \le j < 2^k$) satisfies that \,$T_n \to f \equiv 0$ \,in measure but, for every point $x_0 \in [0,1]$, the sequence \,$\{T_n(x_0)\}_{n \ge 1}$ \,does not converge.

\vskip .15cm

In order to face the lineability of this phenomenon, we need, once more, to put the problem in an adequate framework. Let \,$L_0^{\N}$ \,be the space of all sequences of measurable functions \,$[0,1] \to \R$, endowed with the product topology. Since \,$L_0$ \,is metrizable and separable, the space \,$L_0^{\N}$ \,is also a complete metrizable separable topological vector space. Again, by Baire's theorem, this implies \,dim$(L_0^{\N}) = \mathfrak{c}$.  Moreover, the set
$$
\big\{\Phi = (f_n) \in L_0^{\N}: \, \exists N = N(\Phi ) \in \N \hbox{ such that } f_n = 0 \hbox{ for all } n \ge N\big\}  \leqno (7.1)
$$
is dense in the product space. Now, we are ready to state our next theorem, with which we finish this paper.

\begin{theorem}
The family of Lebesgue classes of sequences \,$(f_n) \in L_0^{\N}$ \,such that \,$f_n \to 0$ \,in measure but \,$(f_n)$ \,does not converge almost everywhere in \,$[0,1]$ \,is maximal dense-lineable in \,$L_0^{\N}$.
\end{theorem}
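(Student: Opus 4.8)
The plan is to deduce the theorem from Lemma \ref{maxdenslineable-criterium}, applied with $X = L_0^{\N}$, with $A$ the family in the statement (call it $\mathcal{A}$), and with $B$ the dense vector subspace of $L_0^{\N}$ exhibited in (7.1), namely the sequences that are eventually $0$. Since $\dim(L_0^{\N}) = \mathfrak{c}$, here ``maximal'' means ``$\mathfrak{c}$-dimensional''. The hypotheses of that lemma other than the lineability of $\mathcal{A}$ come essentially for free: $B$ is dense-lineable (the witnessing subspace being $B$ itself, which is dense and infinite dimensional); $\mathcal{A}$ is stronger than $B$, because adding an eventually-zero sequence to $(f_n) \in \mathcal{A}$ alters neither the tail of $(f_n)$, so convergence to $0$ in measure is preserved, nor the set of points of convergence, so a.e.-nonconvergence is preserved; and $\mathcal{A} \cap B = \varnothing$, since an eventually-zero sequence converges to $0$ everywhere, in particular a.e. Hence everything reduces to proving that $\mathcal{A}$ is $\mathfrak{c}$-lineable.

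For that, I would combine a ``time'' ingredient — the typewriter sequence $(T_n)$ recalled above — with a ``space'' ingredient — a $\mathfrak{c}$-dimensional family of step functions. Concretely: for each $t \in (0,1)$ put $\Phi_t := (\chi_{[0,t]} \cdot T_n)_{n\ge 1} \in L_0^{\N}$ and let $M := {\rm span}\{\Phi_t : t \in (0,1)\}$; a generic element of $M$ is $(h\,T_n)_{n\ge1}$ with $h = \sum_{i=1}^p c_i \chi_{[0,t_i]}$. Three elementary properties of $(T_n)$ do the work: (i) $m({\rm supp}\,T_n) \to 0$; (ii) $\bigcup_n {\rm supp}\,T_n = [0,1]$; (iii) for every $x_0 \in [0,1]$ one has $T_n(x_0) = 1$ for infinitely many $n$ and $T_n(x_0) = 0$ for infinitely many $n$. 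Property (ii) shows that $h \mapsto (h\,T_n)_n$ is injective on $L_0$, and combined with the (elementary) linear independence of the $\chi_{[0,t]}$ in $L_0$ — if $t_1 < \cdots < t_p$ then $h$ takes the value $c_j + \cdots + c_p$ on $(t_{j-1},t_j)$, so $h=0$ a.e.\ forces all $c_i = 0$ — it yields $\dim M = \mathfrak{c}$. For a nonzero $\Phi = (h\,T_n)_n \in M$ one has $m(\{h\ne 0\}) > 0$; property (i) gives $m(\{|h\,T_n| > \alpha\}) \le m({\rm supp}\,T_n) \to 0$, so $h\,T_n \to 0$ in measure; and property (iii) gives that at every $x_0 \in \{h\ne0\}$ the numerical sequence $(h(x_0)\,T_n(x_0))_n$ takes the two distinct values $h(x_0)$ and $0$ each infinitely often, hence diverges. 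Thus $\Phi$ diverges on a set of positive measure, so it does not converge a.e., and $M \setminus \{0\} \subset \mathcal{A}$.

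I do not anticipate a real obstacle; the difficulty is purely bookkeeping. The two points that deserve genuine care are the injectivity of $h \mapsto (h\,T_n)_n$ (which is exactly property (ii), the supports of the $T_n$ covering all of $[0,1]$) and the verification of (iii), in particular the dyadic-endpoint case $x_0 = j2^{-k}$, where $x_0$ may lie in two dyadic intervals of a given generation rather than one — harmless but worth noting. One should also recall en route that $\dim(L_0) = \mathfrak{c}$ (or simply sidestep it by using the explicit family $\{\chi_{[0,t]}\}_{t\in(0,1)}$ as above), and observe that ``converges a.e.'' is well defined on $L_0$-classes of sequences since modifying each $f_n$ on a null set changes the convergence set only by a null set.
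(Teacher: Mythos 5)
Your proposal is correct and follows essentially the same route as the paper: both reduce the statement to $\mathfrak{c}$-lineability via Lemma \ref{maxdenslineable-criterium} with $B$ the eventually-zero sequences of (7.1), and both obtain the $\mathfrak{c}$-dimensional subspace as the span of a one-real-parameter family of modifications of the typewriter sequence. The only difference is minor: the paper uses translated--dilated copies $T_{n,t}(x)=T_n(2(x-t))$, whereas you multiply by the indicators $\chi_{[0,t]}$, which keeps everything inside $[0,1]$ and makes the a.e.-divergence and linear-independence checks slightly cleaner; both work.
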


\begin{proof}
Let \,$(T_n)$ \,be the typewriter sequence defined above, and let \,$A$ \,be the family described in the statement of the theorem, so that \,$(T_n) \in A$.
Extend each \,$T_n$ \,to the whole \,$\R$ \,by defining \,$T_n(x) = 0$ \,for all \,$x \notin [0,1]$.
It is readily seen that, for each \,$t \in (0,1/2)$, the translated-dilated sequence \,$T_{n,t} (x) := T_n(2(x-t))$ $(n \ge 1)$ \,also tends to \,$0$ \,in measure. Consider the vector space
$$
M := \hbox{span} \, \{(T_{n,t}): \, t > 0\}.
$$
The sequences \,$(T_{n,t})$ $(0 < t < 1/2)$ \,are linearly independent. Indeed, if it were not the case, there would be \,$0 < t_1 < t_2 < \cdots < t_s < 1/2$ \,as well as real numbers \,$c_1, \dots ,c_s$ \,with \,$c_s \ne 0$ \,such that \,$c_1 T_{n,t_1} + \cdots + c_s T_{n,t_s} = 0$ \,for all \,$n \in \N$. In particular, $c_1 T_{1,t_1}(x) + \cdots + c_s T_{1,t_s}(x) = 0$ \,for almost all \,$x \in \R$. But \,$T_1 = \chi_{[0,1]}$,
so \,$T_{1,t} = \chi_{[t,t+{1\over2}]}$ \,for all \,$t > 0$. Therefore
$$
c_1 \chi_{[t_1,t_1+{1\over2}]}(x) + \cdots + c_s \chi_{[t_s,t_s+{1\over2}]}(x) = 0 \hbox{ \ for almost all \,} x \in [0,1].
$$
But, for every \,$x \in (\max \{t_{s-1}+{1\over2},t_s\},t_s + {1\over2}]$, the left-hand side of the last expression equals \,$0 + \cdots + 0 + c_s \cdot 1 = c_s \ne 0$, which is absurd. This shows the required linear independence. Then \,${\rm dim} \,(M) = \mathfrak{c}$. Moreover, since \,$L_0$ \,is a topological vector space carrying the topology of convergence in measure, we get that every member of \,$(F_n) := (c_1 T_{n,t_1} + \cdots + c_s T_{n,t_s}) \in M$ \,is a sequence tending to \,$0$ \,in measure.

\vskip .15cm

Next, fix any \,$(F_n) \in M$ \,as above, with \,$0 < t_1 < t_2 < \cdots < t_s$ \,and \,$c_s \ne 0$.
For all \,$x \in (\max \{t_s,t_{s-1}+{1\over2}\},t_s+{1\over2}]$, we have
$$
F_n(x) = \sum_{j=1}^s c_j \, T_{n,t_j} (x) =  \sum_{j=1}^s c_j \, T_{n} (2(x - t_j)) = c_s \, T_n (2(x - t_s)).
$$
Since \,$c_s \ne 0$ \,and \,$(T_n(y))$ \,does not converge for every \,$y \in (\max \{0,2(t_{s-1} - t_s) + 1\},1] \,\, (\subset [0,1])$, we derive that, for each
\,$x \in (\max \{t_s,t_{s-1}+{1\over2}\},t_s+{1\over2}]$, the sequence \,$(F_n(x))$ \,does not converge. This shows that \,$M \setminus \{0\} \subset A$. Thus, $A$ \,is $\mathfrak{c}$-lineable. Finally, an application of Lemma \ref{maxdenslineable-criterium} with
$$
X = L_0^{\N} \hbox{ \ and \ } B = \hbox{the set given by (7.1)}
$$
puts an end on the proof.
\end{proof}


\begin{bibdiv}
\begin{biblist}

\bib{aronbernalpellegrinoseoane2015}{book}{
  author={Aron, R.M.},
  author={Bernal-Gonz\'alez, L.},
  author={Pellegrino, D.}
  author={Seoane-Sep\'{u}lveda, J.B.},
  title={Lineability: The search for linearity in Mathematics, Monographs and Research Notes in Mathematics},
  series={Monographs and Research Notes in Mathematics},
  publisher={Chapman \& Hall/CRC},
  place={Boca Raton, FL},
  date={2015},                  
}

\bib{arongarciaperezseoane2009}{article}{
  author={Aron, R.M.},
  author={Garc\'{i}a-Pacheco, F.J.},
  author={P\'{e}rez-Garc\'{i}a, D.},
  author={Seoane-Sep\'{u}lveda, J. B.},
  title={On dense-lineability of sets of functions on $\mathbb {R}$},
  journal={Topology},
  volume={48},
  date={2009},
  number={2--4}
  pages={149--156},
}

\bib{arongurariyseoane2005}{article}{
  author={Aron, R.M.},
  author={Gurariy, V.I.},
  author={Seoane-Sep\'{u}lveda, J.B.},
  title={Lineability and spaceability of sets of functions on \(\Bbb R\)},
  journal={Proc. Amer. Math. Soc.},
  volume={133},
  date={2005},
  number={3},
  pages={795--803},
}

\bib{aronperezseoane2006}{article}{
  author={Aron, R.M.},
  author={P\'{e}rez-Garc{\'{\i }}a, D.},
  author={Seoane-Sep\'{u}lveda, J.B.},
  title={Algebrability of the set of nonconvergent Fourier series},
  journal={Studia Math.},
  volume={175},
  date={2006},
  number={1},
  pages={83--90},
}

\bib{bayart2005}{article}{
  author={Bayart, F.},
  title={Topological and algebraic genericity of divergence and universality},
  journal={Studia Math.},
  volume={167},
  date={2005},
  number={2},
  pages={161--181},
}

\bib{balcerzakbartoszewiczfilipczak2013}{article}{
  author={Balcerzak, M.},
  author={Bartoszewicz, A.},
  author={Filipczac, M.},
  title={Nonseparable spaceability and strong algebrability of sets of continuous singular functions},
  journal={J. Math. Anal. Appl.},
  volume={407},
  date={2013},
  pages={263--269},    
}

\bib{bartoszewiczbieniasfilipczakglab2014}{article}{
   author={Bartoszewicz, A.},
   author={Bienias, M.},
   author={Filipczak, M.},
   author={G\l \c ab, S.},
   title={Strong $\germ{c}$-algebrability of strong Sierpi\'nski-Zygmund, smooth nowhere analytic and other sets of functions},
   journal={J. Math. Anal. Appl.},
   volume={412},
   date={2014},
   number={2},
   pages={620--630}, 
}

\bib{bartoszewiczglab2013}{article}{
  author={Bartoszewicz, A.},
  author={G\l \c ab, S.},
  title={Strong algebrability of sets of sequences of functions},
  journal={Proc. Amer. Math. Soc.},
  volume={141},
  date={2013},
  pages={827--835},
}

\bib{israel}{article}{
	author={Bastin, F.},
	author={Conejero, J. A.},
	author={Esser, C.},
	author={Seoane-Sep{\'u}lveda, J. B.},
	title={Algebrability and nowhere Gevrey differentiability},
	journal={Israel J. Math.},
	volume={205},
	date={2015},
	number={1},
	pages={127--143},
	doi={10.1007/s11856-014-1104-1},
}

\bib{bernal2010}{article}{
  author={Bernal-Gonz{\'a}lez, L.},
  title={Algebraic genericity of strict-order integrability},
  journal={Studia Math.},
  volume={199},
  date={2010},
  number={3},
  pages={279--293},
}

\bib{bernal2014}{article}{
  author={Bernal-Gonz{\'a}lez, L.},
  title={Vector spaces of non-extendable holomorphic functions},
  journal={J. Analyse Math.},
  date={2015},
  status={accepted for publication},
}

\bib{bernalpellegrinoseoane2014}{article}{
   author={Bernal-Gonz{\'a}lez, L.},
   author={Pellegrino, D.},
   author={Seoane-Sep{\'u}lveda, J.B.},
   title={Linear subsets of nonlinear sets in topological vector spaces},
   journal={Bull. Amer. Math. Soc. (N.S.)},
   volume={51},
   date={2014},
   number={1},
   pages={71--130}, 
}

\bib{bruckner1978}{book}{
  author={Bruckner, A.M.},
  title={Differentiation of real functions},
  series={Lecture Notes in Mathematics},
  volume={659},
  publisher={Springer-Verlag},
  place={Berlin},
  date={1978}, 
}

\bib{enflogurariyseoane2014}{article}{
	author={Enflo, P.H.},
	author={Gurariy, V.I.},
	author={Seoane-Sep{\'u}lveda, J.B.},
	title={Some results and open questions on spaceability in function
		spaces},
	journal={Trans. Amer. Math. Soc.},
	volume={366},
	date={2014},
	number={2},
	pages={611--625},
	doi={10.1090/S0002-9947-2013-05747-9},
}

\bib{fenoyseoane2015}{article}{
	author={Fenoy, M.},
	author={Seoane-Sep\'{u}lveda, J.B.},
	title={Lineability within Probability Theory settings},
	status={preprint},
	date={2015},
}

\bib{foran1991}{book}{
  author={Foran, J.}, 
  title={Fundamentals of real analysis},
  series={Monographs and Textbooks in Pure and Applied Mathematics},
  volume={144},
  publisher={Marcel Dekker Inc.},
  place={New York},
  date={1991},  
}

\bib{gamez2011}{article}{
	author={G\'{a}mez-Merino, J. L.},
	title={Large algebraic structures inside the set of surjective functions},
	journal={Bull. Belg. Math. Soc. Simon Stevin},
	volume={18},
	date={2011},
	number={2},
	pages={297--300},
}

\bib{gamezmunozsanchezseoane2010}{article}{
  author={G\'{a}mez-Merino, J.L.},
  author={Mu\~{n}oz-Fern\'{a}ndez, G.A.},
  author={S\'{a}nchez, V.M.},
  author={Seoane-Sep\'{u}lveda, J.B.},
  title={Sierpi\'nski-Zygmund functions and other problems on lineability},
  journal={Proc. Amer. Math. Soc.},
  volume={138},
  date={2010},
  number={11},
  pages={3863--3876},
}

\bib{gamezseoane2013}{article}{
	author={G{\'a}mez-Merino, Jos{\'e} L.},
	author={Seoane-Sep{\'u}lveda, Juan B.},
	title={An undecidable case of lineability in $\Bbb{R}^{\Bbb{R}}$},
	journal={J. Math. Anal. Appl.},
	volume={401},
	date={2013},
	number={2},
	pages={959--962},
}

\bib{gelbaumolmsted1964}{book}{
  author={Gelbaum, B.R.},  
  author={Olmsted, J.M.H.},  
  title={Counterexamples in analysis},
  series={The Mathesis Series},
  publisher={Holden-Day Inc.},
  place={San Francisco, Calif.},
  date={1964},  
}

\bib{gelbaumolmsted2003}{book}{
  author={Gelbaum, B.R.},
  author={Olmsted, J.M.H.},
  title={Counterexamples in analysis},
  note={Corrected reprint of the second (1965) edition},
  publisher={Dover Publications Inc.},
  place={Mineola, NY},
  date={2003}, 
}


\bib{gurariyquarta2004}{article}{
  author={Gurariy, V.I.},
  author={Quarta, L.},
  title={On lineability of sets of continuous functions},
  journal={J. Math. Anal. Appl.},
  volume={294},
  date={2004},
  number={1},
  pages={62--72},
}

\bib{hernandezruizsanchez2015}{article}{
	author={Hern\'andez, F.L.},
	author={Ruiz, C.},
	author={S\'anchez, V.M.},
	title={Spaceability and operators ideals},
	journal={J. Math. Anal. Appl.},
	date={2015},
	status={accepted for publication},
}

\bib{jordan1998}{thesis}{
  author={Jordan, F.E.},
  title={Cardinal numbers connected with adding Darboux-like functions},
  type={Ph. D. dissertation},
  institution={West Virginia University, USA},
  date={1998},
}

\bib{lebesgue1904}{book}{
  author={Lebesgue, H.},
  title={Le\c {c}ons sur l'int\'{e}gration et la recherche des fonctions primitives},
  publisher={Gauthier-Willars},
  date={1904},
}

\bib{nielsen1997}{book}{
author={Nielsen, O.A.},
title={An introduction to integration and measure theory},
series={Canadian Mathematical Society Series of Monographs and Advanced
Texts},
note={A Wiley-Interscience Publication},
publisher={John Wiley \& Sons, Inc., New York},
date={1997},
}

\bib{pompeiu1906}{article}{
  author={Pompeiu, D.},
  title={Sur les functions d\'eriv\'ees},
  journal={Mathematische Annalen},
  volume={63},
  date={1906},
  pages={326--332},
}

\bib{rudin1991}{book}{
  author={Rudin, W.},  
  title={Functional analysis},
  edition={2},
  publisher={McGraw-Hill Book Co.},
  place={New York},
  date={1991},
}

\bib{vanrooijschikhof1982}{book}{
  author={Van Rooij, A.C.M.},
  author={Schikhof, W.H.},
  title={A second course on real functions},
  publisher={Cambridge University Press},
  place={Cambridge},
  date={1982},
}

\bib{wisehall1993}{book}{
   author={Wise, G. L.},
   author={Hall, E. B.},
   title={Counterexamples in Probability and Real Analysis},
   publisher={Oxford University Press, USA},
   date={1993},
   isbn={978-0-195-36130-8},
}

\end{biblist}
\end{bibdiv}

\end{document}